\def\cb{\operatorname{cb}}
\def\cbp{\operatorname{CBP}}
\def\acl{\operatorname{acl}}
\def\tp{\operatorname{tp}}
\def\stp{\operatorname{stp}}
\def\dcl{\operatorname{dcl}}
\def\ccm{\operatorname{CCM}}
\def\dcf{\operatorname{DCF}_0}
\def\PP{\mathbb{P}}
\def\CC{\mathbb{C}}
\def\C{\mathcal{C}}
\def\Int{\operatorname{Int}}
\def\loc{\operatorname{loc}}
\def\aut{\operatorname{Aut}}
\def\Ind#1#2{#1\setbox0=\hbox{$#1x$}\kern\wd0\hbox to 0pt{\hss$#1\mid$\hss}
\lower.9\ht0\hbox to 0pt{\hss$#1\smile$\hss}\kern\wd0}
\def\ind{\mathop{\mathpalette\Ind{}}}
\def\Notind#1#2{#1\setbox0=\hbox{$#1x$}\kern\wd0\hbox to 0pt{\mathchardef
\nn=12854\hss$#1\nn$\kern1.4\wd0\hss}\hbox to
0pt{\hss$#1\mid$\hss}\lower.9\ht0 \hbox to
0pt{\hss$#1\smile$\hss}\kern\wd0}
\def\nind{\mathop{\mathpalette\Notind{}}}
\newtheorem{theorem}{Theorem}[section]
\newtheorem{proposition}[theorem]{Proposition}
\newtheorem{lemma}[theorem]{Lemma}
\newtheorem{corollary}[theorem]{Corollary}
\newtheorem{claim}[theorem]{Claim}
\newtheorem{example}[theorem]{Example}
\theoremstyle{definition}
\newtheorem{definition}[theorem]{Definition}
\newtheorem{fact}[theorem]{Fact}
\theoremstyle{remark}
\newtheorem{remark}[theorem]{Remark}
\begin{document}

\title[Fibrations and algebraic reductions]{Some model theory of\\ fibrations and algebraic reductions}
\author{Rahim Moosa and Anand Pillay}
\date{December 17th, 2013}
\thanks{The first author was supported by an NSERC Discovery Grant.
The second author was supported by EPSRC grant EP/I002294/1.
Both authors would like to thank the Max Planck Institute in Bonn where some of this work was carried out.}

\thispagestyle{empty}

\bigskip
\bigskip

\begin{abstract}
Let $p= \tp(a/A)$ be a stationary type in an arbitrary  finite rank stable theory, and $\PP$ an $A$-invariant family of partial types.
The following property is introduced and characterised: whenever $c$ is definable over $(A,a)$ and $a$ is not algebraic over $(A,c)$ then $\tp(c/A)$ is almost internal to $\PP$.
The characterisation involves  among other things an apparently new notion of ``descent" for stationary types. 
Motivation comes partly from results in  Section~2 of [Campana, Oguiso, and Peternell.  Non-algebraic hyperk\"ahler manifolds. {\em Journal of Differential Geometry}, 85(3):397--424, 2010]
where structural properties of generalised hyperk\"ahler manifolds are given. 
The model-theoretic results obtained here are applied back to the complex analytic setting to prove that the algebraic reduction of a nonalgebraic (generalised) hyperk\"ahler manifold does not descend.
The results are also applied to the theory of differentially closed fields, where examples coming from differential algebraic groups are given.

\end{abstract}
\maketitle
\tableofcontents
\section{Introduction}

\noindent
A well-known result in stability theory is that any stationary type $p$ of finite $U$-rank  has an ``analysis" via semiminimal types.
More precisely if $p= \tp(a/A)$ then there are $a_{0},...,a_{n}$ such that $\dcl(A,a)=\dcl(A,a_{n})$, $a_{i}\in\dcl(A,a_{i+1})$, and all the types $\tp(a_0/A)$ and $\tp(a_{i+1}/A,a_{i})$ for $i<n$ are stationary and semiminimal.
Here the {\em seminimality} of a type $r\in S(B)$ means that for some stationary $U$-rank $1$ type $q$ nonorthogonal to $B$, $r$ is almost  internal to the family of $B$-conjugates of $q$.
This applies in particular to the many-sorted  theory $\ccm$ of compact complex manifolds, in a natural language (see~\cite{sat}).
If $p = \tp(a)$ is the generic type of an irreducible  compact complex manifold $X$ then  $a_{0},..,a_{n}$ will be  generic types of irreducible compact complex manifolds $X_{0},..,X_{n}$ and we have dominant meromorphic maps $X\to X_{n}\to X_{n-1} \to\cdots \to X_{0}$ such that $X_0$ has generic type semiminimal, each $X_{i+1}\to X_{i}$ for $i<n$ has general fibre with generic type semiminimal,  and $X\to X_{n}$ is a bimeromorphism.
In fact this is precisely the method used in bimeromorphic geometry to understand or describe a  (possibly nonalgebraic) K\"ahler manifold $X$.
The reason we restrict to K\"ahler here is that due to essential saturation (see~\cite{sat}), we may assume that $a$ and the $a_{i}$ are in the standard model so the fibres of the maps are also compact complex manifolds.
The usual procedure is to begin the analysis with the so-called {\em  algebraic reduction} of $X$, namely taking $X_{0}$ to be an algebraic (i.e. Moishezon or even projective) compact complex manifold of maximal dimension which is an image of $X$ under a dominant meromorphic map.
Then $X_{0}$  (and $\tp(a_{0})$)  will be internal to the strongly minimal sort of the projective line $\PP(\CC)$.
The dimension of $X_{0}$ is called the algebraic dimension $a(X)$ of $X$. 

A compact complex manifold $X$ is said to be hyperk\"ahler if it has dimension $2n$ and the space of homolomorphic $2$-forms is spanned by a $2$-form $\sigma$  such that $\sigma^{n}$ is everwhere nonzero.
If we relax the condition to ask only that $\sigma^n\neq 0$, then we get the class of generalised hyperk\"ahler manifolds.
In the paper ~\cite{cop} Campana, Oguiso and  Peternell raise a conjecture concerning hyperk\"ahler manifolds $X$:
{\em If $X$ is not algebraic then $a(X)$ is either $0$ or $n$, and in the latter case there is a holomorphic model $f:X\to B$ of the algebraic reduction such that $f$ is Lagrangian, namely the restriction of $\sigma$ to a general fibre of $f$ is zero.} 
The main part of~\cite{cop} is devoted to providing evidence for this conjecture, in particular proving it in dimension $4$.
These results make heavy use of certain structural results for generalised hyperk\"ahler manifolds in section 2 of that  paper, and it is this section 2 which throws up the notions that we study here in a general model-theoretic framework.

Theorem 2.3(1) of ~\cite{cop} says that a generalised hyperk\"ahler manifold $X$ has the following property:
\begin{itemize}
\item[(1)]
If $h:X\to Y$ is any fibration (namely dominant meromorphic map with general fibre connected), then $Y$ is algebraic, namely $h$ factors through the algebraic reduction $f:X\to B$. 
\end{itemize}
This can be rewritten model-theoretically as: suppose $p$ is the generic type of $X$, $a$ realizes $p$, and $c\in\dcl(a)$.
Then either $a\in\acl(c)$ or $tp(c)$ is internal to $\PP(\CC)$.
This is precisely the property that we will study in an arbitrary finite rank context, replacing ``internal to $\PP(\CC)$" by ``almost internal to $\PP$"  for some given invariant family $\PP$ of partial types.
We express the property as: {\em all proper fibrations of $p$ are over $\PP$}.
Let us remark here that in $\ccm$, internality to $\PP(\CC)$ is equivalent to almost internality to $\PP(\CC)$. 

It follows immediately from~(1) that:
\begin{itemize}
\item[(2)]
The algebraic reduction $f:X\to B$ is a ``minimal fibration"  in the sense that one cannot write $f$ as a composition of proper fibrations $g:X\to S$ and $h:S \to B$.
\end{itemize}
Suppose that $a$ realizes the generic type of $X$ and $f(a) = b$.
The content of~(2) is that if $c\in\dcl(a,b)\setminus\acl(b)$ then $a\in\acl(b,c)$.
In the general context of stable theories of finite rank we will say that a complete stationary type $p = \tp(a)$ {\em has no proper fibrations} if the above condition holds.

So, in the general finite rank context, if all proper fibrations of a stationary type $p=\tp(a)$ are over a given invariant set of partial types $\PP$, and $b$ is the maximal tuple in $\dcl(a)$ such that $\tp(b)$ is almost internal to $\PP$ (the analogue of the algebraic reduction), then $\tp(a/b)$ has no proper fibrations.
The question is: what property do we have to add to obtain the converse?
We find an answer in Theorem~\ref{characterisepfibre}:  $\tp(a/b)$ must not ``descend" to any proper relatively algebraically closed subset of $\dcl(b)$.
The notion of descent here is natural and has clear geometric content when interpreted in $\ccm$.
In Section~\ref{ccm-section} we perform this specialisation back to the complex analytic setting, recovering and improving the results on generalised hyper\"ahler manifolds that motivated this work (see Corollary~\ref{hyperkaehler}).

The following condition is also proved of generalised hyperk\"ahler $X$ as part of Theorem~2.1 in~\cite{cop}, and is deduced there from~(2) by complex analytic arguments: 
\begin{itemize}
\item[(3)]
The general fibre of the algebraic reduction  $f:X\to B$ is either algebraic, in which case it is an abelian variety, or of algebraic dimension zero in which case it is ``isotypically semi-simple".
\end{itemize}
Isotypically semisimple means in generic finite-to-finite correspondence with some $S^{k} = S\times ... \times S$ where $S$ is a simple compact complex manifold; ``simple" here means precisely that the generic type of $S$ has $U$-rank $1$. 
For this too we give a model-theoretic account, in Section~\ref{nofibre-section}.
Firstly, we show that a stationary type $p$ in an arbitrary finite rank stable theory has no proper fibrations if and only if it is either almost internal to a non locally modular minimal type (the analogue of an algebraic manifold) or it is interalgebraic over the base parameters with a power of a locally modular minimal type (the analogue of isotypically semi-simple).
This is Proposition~\ref{nofibrations}, a result probably known implicitly to experts in the field.
However, the part of~(3) saying that when the general fibre of $f$ is algebraic it is in fact an abelian variety, is not covered by~\ref{nofibrations}.
Indeed, our account of this refinement, which appears as Proposition~\ref{moish-abelian} below, while essentially model-theoretic (using definable automorphism groups) does make use of a complex analytic argument in one place.

Our results can be specialised to theories other than $\ccm$, and this is of course a large part of the point.
In Section~\ref{dcf-section} we consider differentially closed fields of characteristic $0$.
Here the role of $\PP$ is played by the field of constants $\C$.
So a finite dimensional differential-algebraic variety is considered ``algebraic" if it is almost internal to the constants, we say {\em $\C$-algebraic}.
Proposition~\ref{minimaldcf} and Theorem~\ref{characterisepfibredcf} can then be viewed as differential-algebraic geometric analogues of results from complex-analytic geometry.
In Example~\ref{example} we illustrate a class of non $\C$-algebraic finite dimensional differential-algebraic varieties (indeed differential-algebraic groups) to which these theorems apply.

Other contexts to which these results would apply, but that we do not pursue here, are finite dimensional difference varieties as well as Hasse-Schmidt varieties in positive characteristic.

We should mention that the distinction between definable and algebraic closure is crucial in this paper. 

\bigskip
\section{No Fibrations}
\label{nofibre-section}

\noindent
In the next two sections we work in the general setting of a sufficiently saturated model $\overline M^{\operatorname{eq}}$ of a complete theory $T$ of finite $U$-rank.
Our stability-theoretic notation is standard, see ~\cite{pillay96}.
The word ``minimal" is used in this paper in various different ways, but by a minimal type we mean a stationary type of $U$-rank $1$. 

\begin{definition}
\label{nofibrationsdef}
A stationary type $\tp(a/A)$ {\em admits no proper fibrations} if whenever $c\in \dcl(Aa)\setminus\acl(A)$ then $a\in\acl(Ac)$.
\end{definition}

Clearly any minimal type admits no proper fibrations.
But there exist others.

\begin{example}
\label{examplenofibre}
Suppose $p\in S(A)$ is a trivial minimal stationary type.
If $a_1,\dots,a_r$ are independent realisations of $p$ and $a$ is a code for $\{a_1,\dots,a_r\}$, then $\tp(a/A)$ admits no proper fibrations.
\end{example}

\begin{proof}
Let $d\in\dcl(Aa)$.
We wish to show that either $d\in\acl(A)$ or $a\in \acl(Ad)$.
Suppose some $a_i\in\acl(Ad)$.
Then, as every permutation of $\{a_1,\dots,a_r\}$ extends to an automorphism fixing $Aa$, and hence fixing $Ad$, it follows that each $a_1,\dots,a_r\in\acl(Ad)$.
But then $a\in\acl(Ad)$, and we are done.
We may therefore assume that each $a_i\notin\acl(Ad)$.
By triviality, it follows that  $d\ind_A(a_1,\dots,a_r)$.
But $d\in\acl(Aa_1\dots a_r)$, and hence $d\in\acl(A)$, as desired.
\end{proof}

The above example, while not itself minimal, is very much related to a minimal type.
The following proposition says that having no proper fibrations implies seminiminality.
Recall that  a stationary type $p=\tp(a/A)$ is semiminimal if and only if it is {\em almost internal} to a minimal type; that is, there exist $B\supseteq A$ with $a\ind_AB$, a minimal type $r$ over $B$, and a $B$-independent tuple $(e_1,\dots,e_\ell)$ of realisations of $r$, such that $\acl(Ba)=\acl(Be_1\dots e_\ell)$.

\begin{proposition}
\label{nofibrations}
Suppose $p=\tp(a/A)$ is stationary and admits no proper fibrations.
Then either
\begin{itemize}
\item[(i)]
$p$ is almost internal to a non locally modular minimal type, or
\item[(ii)]
 $a$ is interalgebraic over $A$ with a finite tuple of independent realisations of a locally modular minimal type over $A$.
\end{itemize}
In particular, $p$ is semiminimal.
\end{proposition}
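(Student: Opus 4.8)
The plan is to prove first that $p$ is semiminimal and then to read off the dichotomy from the two possible behaviours of the witnessing minimal type.

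\emph{Semiminimality (which gives the final clause).} Since $T$ has finite $U$-rank, $p$ admits a semiminimal analysis $a_0,\dots,a_n$: one has $\dcl(Aa)=\dcl(Aa_n)$, $a_i\in\dcl(Aa_{i+1})$, and $\tp(a_0/A)$ together with each $\tp(a_{i+1}/Aa_i)$ is semiminimal. We may assume $a\notin\acl(A)$. Let $i$ be least with $a_i\notin\acl(A)$; since $a_{i-1}\in\acl(A)$ (or $i=0$), the type $\tp(a_i/A)$ is almost internal to the minimal type over the same base $B$ that witnesses semiminimality of $\tp(a_i/Aa_{i-1})$, and $a_i\ind_A B$ is inherited, so $\tp(a_i/A)$ is semiminimal. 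Now $a_i\in\dcl(Aa)\setminus\acl(A)$, so the hypothesis that $p$ admits no proper fibrations gives $a\in\acl(Aa_i)$; hence $a$ and $a_i$ are interalgebraic over $A$ and $p$ is itself semiminimal. Fix accordingly a minimal type $q$ over $\acl(A)$ to whose family of $A$-conjugates $p$ is almost internal.

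\emph{The dichotomy.} Conjugation over $A$ preserves local modularity, so all $A$-conjugates of $q$ are locally modular or all are not. If they are not locally modular, then $p$ is almost internal to a \emph{non locally modular} minimal type: replacing the family of conjugates by a single type over a base $B\supseteq A$ with $a\ind_A B$ is routine (one uses that the conjugates are pairwise non-orthogonal, which the no-fibrations hypothesis forces, since otherwise $p$ would split off a proper fibration), and this is alternative~(i). Suppose instead $q$ is locally modular. Choosing a realisation $e_0$ of a conjugate of $q$ with $e_0\ind_A a$ and passing to $A':=\acl(Ae_0)$, local modularity makes the relevant minimal type $q'$ over $A'$ \emph{one-based}, while $a\ind_A A'$ is preserved and $p$ is still almost internal to the conjugates of $q'$. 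I would then invoke the structure of one-based minimal types---$q'$ is either disintegrated or non-orthogonal to the generic type of a one-based minimal group---together with one-basedness, which puts the pertinent canonical bases inside the algebraic closures of tuples of realisations, to conclude that $a$ is interalgebraic over $A'$ with a finite tuple $\bar e$ of independent realisations of a single one-based minimal type.

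\emph{Descent, and the main obstacle.} It remains to descend the base from $A'$ back to $A$ and to upgrade ``almost internal'' to the exact interalgebraicity demanded by~(ii); this is where the no-fibrations hypothesis is used again, and essentially. Every intermediate quotient of $a$ is coded by an element of $\dcl(Aa)$, so by hypothesis it is either algebraic over $A$ or interalgebraic with $a$ over $A$, and this collapses all proper intermediate fibrations: in the disintegrated case it exhibits $a$, over $A$, as a code for a set of independent realisations of a locally modular minimal type over $A$ (the situation of Example~\ref{examplenofibre}), and in the modular case it forces the module structure to degenerate, so that $a$ is interalgebraic over $A$ with a tuple of independent generics of a locally modular minimal group over $A$. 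Either way we are in alternative~(ii). The soft parts are semiminimality and alternative~(i); the technical heart, and what I expect to be the main obstacle, is precisely this last step---combining the disintegrated-versus-modular-group structure theory with the no-fibrations hypothesis to kill all intermediate quotients, and descending from $A'$ to $A$ while keeping straight the distinction between $\dcl$ and $\acl$, which is exactly what the $\dcl$-formulated hypothesis is tailored to control.
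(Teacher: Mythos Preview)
Your semiminimality argument is fine and essentially parallel to the paper's: you use the semiminimal analysis and pick the first nontrivial $a_i$, while the paper uses directly that $p$ is not foreign to the conjugates of a minimal type $r$ to produce $c\in\dcl(Aa)\setminus\acl(A)$ with $\stp(c/A)$ internal to those conjugates; in both cases no-fibrations forces $a\in\acl(Ac)$ (respectively $\acl(Aa_i)$), and semiminimality follows. Case~(i) is also unproblematic.

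The locally modular case, however, is where your proposal has a genuine gap, and your route is much more complicated than necessary. You pass to a larger base $A'=\acl(Ae_0)$, split into disintegrated versus group-like, invoke unstated structure theory to get interalgebraicity over $A'$, and then face a ``descent'' back to $A$ that you do not actually carry out. The difficulty you flag is real: once you have enlarged the base, the no-fibrations hypothesis (which is over $A$, and crucially about $\dcl$) no longer applies directly, and your final paragraph does not explain how ``collapsing intermediate quotients'' actually produces a tuple of realisations of a minimal type \emph{over $A$} lying in $\dcl(Aa)$.

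The paper sidesteps all of this. With $c$ as above and $a$ interalgebraic with $c$ over $A$, one has that $\stp(c/A)$ is $1$-based. If $U(c/A)>1$, choose $e$ with $U(c/Ae)=U(c/A)-1$ and $e=\cb(c/Ae)$; $1$-basedness gives $e\in\acl(Ac)$, and the Lascar inequalities give $U(e/A)=1$. Now take the finite set of $Ac$-conjugates $(e_1,\dots,e_n)$ of $e$: this tuple lies in $\dcl(Ac)\subseteq\dcl(Aa)$, so no-fibrations over $A$ applies directly and yields $a\in\acl(Ae_1,\dots,e_n)$. No base change, no trivial/nontrivial case split, no descent step. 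The idea you are missing is precisely this move: use $1$-basedness once to find a single rank-$1$ element $e$ in $\acl(Ac)$, then pass to its $Ac$-conjugates to land in $\dcl(Aa)$, where the hypothesis bites.
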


\begin{proof}
We may assume that $p$ is nonalgebraic.
Hence $p$ is nonorthogonal to some minimal type $r$.
Let ${\bf R}$ be the set of $\acl(A)$-conjugates of $r$.
Then, as $p$ is not foreign to ${\bf R}$, there exists $c\in\dcl(Aa)\setminus\acl(A)$ with $\stp(c/A)$ ${\bf R}$-internal (see 7.4.6 of~\cite{pillay96}).
By the assumption of having no proper fibrations we must have $a\in\acl(Ac)$, so that $\stp(a/A)=p$ is almost ${\bf R}$-internal.
But this implies that $p$ is almost internal to $r$, and so $r$-semiminimal.

Suppose now that $r$ is locally modular.
Then, with $c$ as above,
we have that $\stp(c/A)$ is $1$-based.
If $U(c/A)=1$ then, as $a$ is interalgebraic with $c$ over $A$, we have that $p$ itself is minimal locally modular, and we are done.
If $U(c/A)>1$ then let $e$ be such that $U(c/Ae)=U(c/A)-1$, and $e=\cb(c/Ae)$.
By $1$-basedness, $e\in\acl(Ac)$, and hence the Lascar inequalities yield that
$$U(e/A)=U(c/A)+U(e/Ac)-U(c/Ae)=1$$
So $\stp(e/A)$ is locally modular and minimal.
Now let $(e_1,\dots,e_n)$ be the $Ac$-conjugates of $e$, so that $(e_1,\dots,e_n)\in\dcl(Ac)\subseteq \dcl(Aa)$.
The assumption of no fibrations yields that $a\in\acl(Ae_1,\dots,e_n)$.
So $(e_1,\dots,e_n)$ is a finite tuple of realisations of a locally modular minimal type over $A$, and $a$ is interalgebraic with this tuple over $A$.
\end{proof}

Case~(ii) of Proposition~\ref{nofibrations} naturally splits into two further cases; when the minimal locally modular type involved is trivial and when it is not trivial.
We have already seen a non minimal example (\ref{examplenofibre}) of the trivial case of~(ii).
The following construction witnesses that the non trivial case of~(ii) also occurs.

\begin{example}
Suppose $G$ is a locally modular strongly minimal group over $A=\acl(A)$ that is not $2$-torsion.
Let $E(x,y)$ be the equivalence relation $x=\pm y$ on $G$.
If $g,h$ are independent generic elements of $G$, and $a$ is a code for $\{g/E,h/E\}$, then $\tp(a/A)$ admits no proper fibrations.
\end{example}

\begin{proof}
Assume toward a contradiction that there exists a proper fibration.
Since $\tp(a/A)$ is of rank $2$, this means there exists $c\in\dcl(Aa)$ such that $\tp(a/Ac)$ is of rank $1$.
As the $E$-classes are finite, $g,h\in\acl(a)$.
Hence $p(x,y):=\stp(g,h/Ac)$ is stationary of rank $1$, and so by local modularity is the generic type of an $\acl(Ac)$-definable coset $X$ of a connected rank $1$ $A$-definable subgroup $H\leq G^2$.

\begin{claim}
\label{projH}
The co-ordinate projections $\pi_i:H\to G$, for $i=1,2$, are finite-to-one surjective group homomorphisms.
\end{claim}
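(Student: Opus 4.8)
The plan is to read the surjectivity of the $\pi_i$ off the fact that $X$ carries a generic of the special shape $(g,h)$ with $g,h$ independent generics of $G$, and to get finiteness of the fibres from strong minimality of $G$ together with $U(H)=1$. First I would record the rank bookkeeping: since the $E$-classes are finite we have $g,h\in\acl(Aa)$, while $a\in\dcl(Agh)$ is a code, so $\acl(Aa)=\acl(Agh)$ and therefore $U(gh/Ac)=U(a/Ac)=1$ (recall $U(a/Ac)=1$ by the choice of $c$).

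The crucial point, and the one I expect to be the main obstacle, is to show that $g$ is \emph{not} algebraic over $Ac$ — a priori a rank-$1$ coset in $G\times G$ could collapse under $\pi_1$ to a point. Here I would exploit the symmetry built into the example: $\tp(g,h/A)$ is the unique type of an $A$-independent pair of realisations of the (stationary) generic of $G$, hence it equals $\tp(h,g/A)$, so there is $\sigma\in\aut(\overline M/A)$ with $\sigma(g)=h$ and $\sigma(h)=g$; since $a$ is a code for the \emph{unordered} set $\{g/E,h/E\}=\{h/E,g/E\}$ and $c\in\dcl(Aa)$, this $\sigma$ fixes $Ac$. Consequently $U(g/Ac)=U(h/Ac)=:k$, and $k\leq U(gh/Ac)=1$; if $k=0$ then $g,h\in\acl(Ac)$, whence $U(gh/Ac)=0$, contradicting the previous paragraph. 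So $k=1$, i.e. each of $g$ and $h$ is generic in $G$ over $\acl(Ac)$.

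It remains to apply the projections. Since $X$ is the $\acl(Ac)$-definable coset of $H$ with generic type $\stp(g,h/Ac)$, the image $\pi_1(X)$ is an $\acl(Ac)$-definable coset of $\pi_1(H)\leq G$ whose generic type is $\stp(g/Ac)$; by the last paragraph this is the generic type of $G$, so $\pi_1(X)=G$ and hence $\pi_1(H)=G$, i.e. $\pi_1\colon H\to G$ is a surjective group homomorphism, and symmetrically so is $\pi_2$. Finally $\ker\pi_1$, viewed through the second coordinate, is a definable subgroup of $G$; were it infinite it would be all of $G$ (a strongly minimal group has no proper infinite definable subgroup), giving $\{0\}\times G\leq H$ and hence, together with $\pi_1(H)=G$, forcing $H=G\times G$ and $U(H)=2$, contrary to $U(H)=1$. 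Thus $\ker\pi_1$ is finite, so $\pi_1$ is finite-to-one, and likewise for $\pi_2$.
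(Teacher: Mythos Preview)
Your proof is correct and uses essentially the same ingredients as the paper's: the swap automorphism $g\leftrightarrow h$ fixing $a$ (and hence $c$), together with the strong minimality of $G$ and connectedness of $H$. The paper organises things as a single short contradiction---if some $\pi_i$ fails then $H=\{0\}\times G$, whence $X=\{g\}\times G$ and $g\in\acl(Ac)$, and the swap gives $h\in\acl(Ac)$, contradicting $U(p)=1$---whereas you invoke the symmetry up front to get $U(g/Ac)=U(h/Ac)=1$ and then treat surjectivity and finiteness of fibres separately; but the content is the same.
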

\begin{proof}[Proof of Claim~\ref{projH}]
If not, by connectedness of $H$ and strong minimality, we may assume that $H=\{0\}\times G$.
So $X=\{g\}\times G$, and $g$ is the canonical parameter for $X$.
As $X$ is $\acl(Ac)$-definable, we get $g\in\acl(Ac)$.
Now, an $A$-automorphism taking $g$ to $h$ preserves $a$, and hence $c$.
So $h\in\acl(Ac)$ also.
This contradicts the fact that $p$ is of rank $1$.
\end{proof}

By the surjectivity of $\pi_1$, $X=(0,e)+H$ for some $e\in G$.
Let $\sigma$ be an $A$-automorphism of the universe fixing $g$ and taking $h$ to $-h$.

\begin{claim}
\label{conje}
$a\in\acl\big(Ae\sigma(e)\big)$.
\end{claim}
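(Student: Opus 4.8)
The plan is to use that $H$ is $A$-definable, so the automorphism $\sigma$ fixes $H$ setwise, and then to play the two coordinate projections of $H$ off against each other by means of Claim~\ref{projH}.

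First, since $\sigma$ fixes $A$ pointwise (so fixes the identity of $G$) and fixes $H$ setwise, we get $\sigma(X)=\sigma\big((0,e)+H\big)=(0,\sigma(e))+H$. As $(g,h)$ realises the generic type of $X$ it lies in $X$, and applying $\sigma$ gives $(g,-h)=\sigma(g,h)\in\sigma(X)$. Rewriting these two memberships as $(g,h-e)\in H$ and $(g,-h-\sigma(e))\in H$ and using that $H$ is a subgroup, adding them gives
$$\big(2g,\,-e-\sigma(e)\big)\in H$$
and subtracting them gives
$$\big(0,\,2h-e+\sigma(e)\big)\in H.$$

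Now I invoke Claim~\ref{projH}. Since $\pi_1\colon H\to G$ is finite-to-one, the set $F:=\{t\in G:(0,t)\in H\}$ is finite and $A$-definable; hence $2h-e+\sigma(e)\in F$ and so $2h\in\acl(Ae\sigma(e))$. Since $\pi_2\colon H\to G$ is finite-to-one, the fibre $\pi_2^{-1}(-e-\sigma(e))\cap H$ is finite and $Ae\sigma(e)$-definable and contains $(2g,-e-\sigma(e))$, so $2g\in\acl(Ae\sigma(e))$. Finally, multiplication by $2$ on $G$ is a definable group endomorphism whose kernel is $G[2]$; since $G$ is not $2$-torsion, $G[2]$ is a proper, hence finite, definable subgroup of the strongly minimal group $G$, so this endomorphism is finite-to-one. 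Therefore $g,h\in\acl(Ae\sigma(e))$, and since $a\in\dcl(Agh)$ the claim follows.

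There is no real obstacle here; the only point to watch is that one must recover \emph{both} $g$ and $h$ from $e$ and $\sigma(e)$, which is exactly why one adds \emph{and} subtracts the two coset memberships and then uses that \emph{each} $\pi_i$ is finite-to-one, together with the elementary fact that multiplication by $2$ is finite-to-one on $G$.
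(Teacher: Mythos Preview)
Your proof is correct and follows essentially the same route as the paper's: from $(g,h-e)\in H$ and $(g,-h-\sigma(e))\in H$ you take the sum and difference to land $(2g,-e-\sigma(e))$ and $(0,2h-e+\sigma(e))$ in $H$, then use finiteness of the projections and of $G[2]$ to conclude. The paper's proof compresses the final step into one sentence, whereas you spell out explicitly why each $\pi_i$ being finite-to-one yields $2g,2h\in\acl(Ae\sigma(e))$ and why ``not $2$-torsion'' makes doubling finite-to-one --- useful expansions but not a different argument.
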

\begin{proof}[Proof of Claim~\ref{conje}]
Since $(g,h)\in(0,e)+H$, we have $(g,h-e)\in H$.
Hence $(g,-h-\sigma(e))\in H$ also.
We obtain that $\big(0,2h-e+\sigma(e)\big)$ and $\big(2g,-e-\sigma(e)\big)$ are both in $H$.
Since the co-ordinate projections are finite-to one, and since $G$ is not $2$-torsion, this implies that $g,h\in\acl\big(Ae\sigma(e)\big)$.
Hence $a\in\acl\big(Ae\sigma(e)\big)$.
\end{proof}

Note that $e\in\acl(Ac)$. Indeed, any automorphism fixing $\acl(Ac)$ preserves $X=(0,e)+H$ and $H$, and hence preserves $e+\ker(\pi_1)$, which is finite.
On the other hand, $\sigma$ fixes $a$ by definition, and hence fixes $c$.
So $\sigma(e)\in\acl(Ac)$ also.
Claim~\ref{conje} then implies that $a\in\acl(Ac)$, contradicting the assumption that $\tp(a/Ac)$ is of rank $1$.
\end{proof}

As for case~(i) of Proposition~\ref{nofibrations}, we will see later (in $\S$\ref{ccm-section} and $\S$\ref{dcf-section}) that non minimal examples appear in the theories $\ccm$ and $\dcf$.

\bigskip
\section{All Fibrations over $\PP$}
\label{allpfibre-section}

\noindent
Fix an $A$-invariant set of partial types, $\mathbb P$.
Often we are interested in the case when $\PP$ is the set of all non locally modular minimal types; in $\operatorname{DCF}_0$ we usually take $\PP$ to be the field of constants and in $\ccm$ it is the sort of the projective line.
But the material in this section makes sense for any $\PP$.

\begin{definition}
Suppose $p:=\tp(a/A)$ is stationary.
We say that {\em all fibrations of $p$ are over $\PP$} if whenever $c\in \dcl(Aa)$ with $a\notin\acl(Ac)$, then $\stp(c/A)$ is almost $\mathbb P$-internal.
\end{definition}

The generic type of a generalised hyperk\"ahler manifold in $\ccm$ has this property; this is Theorem~2.3(1) of~\cite{cop}.
See~\ref{example} below for examples in $\operatorname{DCF_0}$ coming from the logarithmic derivative on a simple abelian variety.

\begin{definition}[$\PP$-reduction]
\label{preduction}
Let $\Int_A(\PP):=\{c:\stp(c/A)\text{ is almost $\PP$-internal}\}$.
We say that $b$ is the {\em $\PP$-reduction of $a$ over $A$} if $\dcl(Ab)=\dcl(Aa)\cap\Int_A(\PP)$.
\end{definition}

\begin{remark}
\label{predrem}
\begin{itemize}
\item[(1)]
All fibrations of $\tp(a/A)$ being over $\PP$ can be reformulated as an ``exchange" property: if $c\in \dcl(Aa)\setminus\Int_A(\PP)$ then $a\in\acl(Ac)$.
\item[(2)]
The $\PP$-reduction of $a$ over $A$ is precisely the canonical base of $\tp\big(a/\Int_A(\PP)\big)$ and has the property that $\tp\big(a/\dcl(Aa)\cap\Int_A(\PP)\big)\vdash\tp\big(a/\Int_A(\PP)\big)$.
See, for example, Lemma~1 of the appendix to~\cite{chatzidakis-hrushovski99}.\end{itemize}
\end{remark}

We will characterise when $\tp(a/A)$ has all fibrations over $\PP$ in terms of the structure of the extension $\tp(a/Ab)$ where $b$ is the $\PP$-reduction of $a$ over $A$.
The key structural property in this characterisation will be the following natural notion of descent for types, which is the model-theoretic counterpart to birational descent in algebraic geometry and bimeromorphic descent in complex-analytic geometry (see Definition~\ref{descentfibre} below).

\begin{definition}[Descent for types]
\label{descent}
A complete stationary type $p\in S(B)$ {\em descends} to $B_0\subseteq\dcl(B)$ if there exist a stationary $q\in S(B_0)$ and a $B$-definable bijection $f:p^{\overline M}\to q_B^{\overline M}$, where by $q_B$ we mean the nonforking extension of $q$ to $B$.
If the map $f$ is only assumed to be finite-to-one and surjective, then we say that $p$ {\em almost descends} to $B_0$.
\end{definition}

\begin{remark}
\label{strongdescent}
\begin{itemize}
\item[(1)]
A special case of $p\in S(B)$ descending to $B_0\subseteq\dcl(B)$, is when $p$ does not fork over $B_0$ and $p\upharpoonright B_0$ is stationary.
\item[(2)]
Suppose $p=\tp(a/Ab)$ where $b$ is the $\PP$-reduction of $a$ over $A$.
If $p$ descends to $Ab_0$ witnessed by $q\in S(Ab_0)$, then in fact $q$ is weakly orthogonal to $\Int_A(\PP)$.
In particular, $q$ has a unique extension to $Ab$ and we have an $Ab$-definable bijection between $p^{\overline M}$ and $q^{\overline M}$.
\end{itemize}
\end{remark}

\begin{proof}[Proof of~\ref{strongdescent}(2)]
Let $q=\tp(a_0/Ab_0)$ with $a_0\ind_{Ab_0}b$.
Since $a\ind_{Ab}\Int_A(\PP)$ and $a_0\in\dcl(Aba)$, we have $a_0\ind_{Ab}\Int_A(\PP)$.
By transitivity we get that $a_0\ind_{Ab_0}\Int_A(\PP)$.
This implies, by invariance, that $q$ is weakly orthogonal to $\Int_A(\PP)$.
Using the fact that $b$ is from $\Int_A(\PP)$, we get the ``in particular" clause.
\end{proof}

Here is our characterisation of having all fibrations over $\PP$.

\begin{theorem}
\label{characterisepfibre}
Suppose $p=\tp(a/A)$ is stationary and not almost internal to~$\PP$, and $b$ is the $\PP$-reduction of $a$ over $A$.
Then the following are equivalent:
\begin{itemize}
\item[(i)]
all fibrations of $p$ are over $\mathbb P$,
\item[(ii)]
$\tp(a/Ab)$ admits no proper fibrations and does not almost descend to any relatively algebraically closed proper subset of $\dcl(Ab)$ containing $A$.
\end{itemize}
\end{theorem}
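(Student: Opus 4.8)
The plan is to prove the two implications separately; the direction (ii)$\Rightarrow$(i) is the more substantial one. Throughout one works in $\overline M^{\operatorname{eq}}$ and freely replaces parameter sets by their algebraic closures where convenient --- the finite-to-one slack built into ``almost descends'' in Definition~\ref{descent} is exactly what absorbs the resulting bookkeeping between $\tp$ and $\stp$. The single tool used repeatedly is a variant of Remark~\ref{strongdescent}(2): if $b_1$ is the $\PP$-reduction of a tuple $c$ over $A$, then $c\ind_{Ab_1}\Int_A(\PP)$, so (using $b\in\Int_A(\PP)$) $\tp(c/Ab)$ does not fork over $Ab_1$, and hence $\cb\big(\stp(c/Ab)\big)\in\dcl(Ab_1)\subseteq\dcl(Ac)\cap\dcl(Ab)\subseteq\acl(Ac)\cap\dcl(Ab)$.

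For (i)$\Rightarrow$(ii): first, $\tp(a/Ab)$ (which is stationary, since $\tp(a/Ab)\vdash\tp(a/\Int_A(\PP))\vdash\tp(a/\acl(Ab))$ by Remark~\ref{predrem}(2)) admits no proper fibrations: given $c\in\dcl(Aa)\setminus\acl(Ab)$, if $a\notin\acl(Abc)$ then $a\notin\acl\big(A(b,c)\big)$, so (i) applied to the tuple $(b,c)$ gives $(b,c)\in\Int_A(\PP)$, whence $c\in\dcl(Aa)\cap\Int_A(\PP)=\dcl(Ab)$, contradicting the choice of $c$. Second, suppose $\tp(a/Ab)$ almost descends to a relatively algebraically closed $B_0$ with $A\subseteq B_0\subsetneq\dcl(Ab)$, witnessed by a stationary $q\in S(B_0)$ and an $Ab$-definable finite-to-one surjection $f$; put $a_0:=f(a)$, so $a_0\in\dcl(Aa)$, $a\in\acl(Aba_0)$, and $a_0\models q_{Ab}$, hence $a_0\ind_{B_0}b$. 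Exactly as in the proof of Remark~\ref{strongdescent}(2), combining $a_0\ind_{B_0}b$ with $a\ind_{Ab}\Int_A(\PP)$ (Remark~\ref{predrem}(2)) and $b\in\Int_A(\PP)$ gives $a_0\ind_{B_0}\Int_A(\PP)$, i.e.\ $q$ is weakly orthogonal to $\Int_A(\PP)$; moreover $q$ is nonalgebraic, since otherwise $a\in\acl(Aba_0)=\acl(Ab)$ would make $p$ almost $\PP$-internal, contrary to hypothesis. Now if $a\notin\acl(Aa_0)$, then (i) forces $\stp(a_0/A)$ to be almost $\PP$-internal, i.e.\ $a_0\in\Int_A(\PP)$, and weak orthogonality then yields $a_0\ind_{B_0}a_0$, i.e.\ $a_0\in\acl(B_0)$, contradicting nonalgebraicity of $q$. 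And if $a\in\acl(Aa_0)$, then $b\in\dcl(Aa)\subseteq\acl(B_0a_0)$, which with $a_0\ind_{B_0}b$ forces $b\in\acl(B_0)\cap\dcl(Ab)=B_0$, contradicting $B_0\subsetneq\dcl(Ab)$. Either way we get a contradiction, so no such $B_0$ exists.

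For (ii)$\Rightarrow$(i): fix $c\in\dcl(Aa)$ with $a\notin\acl(Ac)$; we must show $\stp(c/A)$ is almost $\PP$-internal. If $c\in\acl(Ab)$ this is immediate, since $\stp(b/A)$ is almost $\PP$-internal and almost internality passes to algebraic images. So assume $c\notin\acl(Ab)$; since $\tp(a/Ab)$ admits no proper fibrations, $a\in\acl(Abc)$. Suppose for contradiction that $\stp(c/A)$ is not almost $\PP$-internal, and let $b_1$ be the $\PP$-reduction of $c$ over $A$, so $b_1\in\dcl(Ab)\cap\dcl(Ac)$. By the observation in the first paragraph, $\stp(c/Ab)$ does not fork over $B_0:=\acl(Ac)\cap\dcl(Ab)$, a relatively algebraically closed subset of $\dcl(Ab)$ containing $A$. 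The $Ab$-definable function $g$ with $c=g(a)$ is finite-to-one on realisations of $\tp(a/Ab)$ precisely because $a\in\acl(Abc)$; together with the nonforking statement, $g$ witnesses that $\tp(a/Ab)$ almost descends to $B_0$ (taking as the descended type a suitable stationarisation of $\tp(c/B_0)$). Finally $B_0\subsetneq\dcl(Ab)$: were $b\in\acl(Ac)$, we would get $a\in\acl(Abc)=\acl(Ac)$, contradicting $a\notin\acl(Ac)$. This contradicts (ii), so $\stp(c/A)$ is in fact almost $\PP$-internal.

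I expect the main obstacle to be the parenthetical claim ``$g$ witnesses that $\tp(a/Ab)$ almost descends to $B_0$'' in the last paragraph: one must verify that the pair $(B_0,g)$ genuinely meets Definition~\ref{descent}, i.e.\ that, up to the permitted finite-to-one correction, $\tp(c/Ab)$ is the nonforking extension to $Ab$ of a \emph{stationary} type over $B_0$. This is where the passage to $\overline M^{\operatorname{eq}}$, the replacement of parameters by algebraic closures, and the use of ``almost'' rather than exact descent all enter; the genuinely mathematical content --- that $\cb\big(\tp(c/Ab)\big)$ descends into $\dcl(Ac)$ --- is precisely the weak-orthogonality property of $\PP$-reductions isolated at the outset. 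A secondary point needing care is the analogue of Remark~\ref{strongdescent}(2) for almost (rather than exact) descent invoked in the (i)$\Rightarrow$(ii) direction, but this goes through verbatim.
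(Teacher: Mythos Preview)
Your proof is correct and follows essentially the same route as the paper's; the paper packages the key observation --- that $\tp(a/Ab)$ almost descends to $Ab_0$ if and only if some $a_0\in\dcl(Aa)$ has $a\in\acl(Aba_0)$ and $\PP$-reduction contained in $\dcl(Ab_0)$ --- into a standalone claim invoked in both directions, whereas you argue it inline (in (ii)$\Rightarrow$(i) with $a_0=c$ and $b_0=b_1$ the $\PP$-reduction of $c$). Regarding your flagged obstacle: no ``stationarisation'' or finite-to-one correction is needed, since $\tp(c/B_0)$ is already the nonforking extension to $B_0$ of the stationary type $\tp(c/Ab_1)$ --- stationary because $\tp(c/Ab_1)\vdash\tp(c/\Int_A(\PP))$ by Remark~\ref{predrem}(2) --- along the inclusions $Ab_1\subseteq B_0\subseteq\Int_A(\PP)$, so $g$ witnesses almost descent to $B_0$ directly. (Incidentally, your inner contradiction hypothesis ``$\stp(c/A)$ is not almost $\PP$-internal'' in the (ii)$\Rightarrow$(i) direction is never used: the argument already shows that $c\notin\acl(Ab)$ together with $a\notin\acl(Ac)$ contradicts (ii).)
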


\begin{proof}
We begin with the following characterisation for descent of the $\PP$-reduction.

\begin{claim}
\label{descentequiv}
$\tp(a/Ab)$ almost descends to $Ab_0$ if and only if there exists $a_0\in\dcl(Aa)$ such that $a\in\acl(Aba_0)$ and 
$\dcl(Ab_0)$ contains the $\PP$-reduction of $a_0$ over $A$.
  \end{claim}
  
\begin{proof}[Proof of Claim~\ref{descentequiv}]
If $a_0\in\dcl(Aa)$ and $a\in\acl(Aba_0)$ then there exists an $Ab$-definable finite-to-one surjection $\tp(a/Ab)^{\overline M}\to \tp(a_0/Ab)^{\overline M}$.
If, moreover, $\dcl(Ab_0)$ contains the $\PP$-reduction of $a_0$ over $A$, then, as $b$ is from $\Int_A(\PP)$, we have $a_0\ind_{Ab_0}b$ and $\tp(a_0/Ab_0)$ is stationary.
Hence $\tp(a/Ab)$ almost descends to $Ab_0$, as desired.

Conversely, suppose $q\in S(Ab_0)$ and $f:\tp(a/Ab)^{\overline M}\to q_{Ab}^{\overline M}$ witnesses that $\tp(a/Ab)$ almost descends to $Ab_0$.
Letting $a_0:=f(a)$ it suffices to show that $b_0$ contains the $\PP$-reduction of $a_0$ over $A$.
We know that $a_0\ind_{Ab_0}b$ since $q_{Ab}$ is the nonforking extension of $q$.
On the other hand, as $b$ is the $\PP$-reduction of $a$ over $A$, $a_0\ind_{Ab}\Int_A(\PP)$.
Hence $a_0\ind_{Ab_0}\Int_A(\PP)$, and so $\dcl(Ab_0)$ contains the $\PP$-reduction of $a_0$ over $A$.
\end{proof}

Next, we prove that~(i) implies~(ii) in Theorem~\ref{characterisepfibre}.
It follows more or less immediately from the definitions that since all fibrations of $p=\tp(a/A)$ are over $\PP$, and $b$ is the $\PP$-reduction of $a$ over $A$, that $\tp(a/Ab)$ admits no proper fibrations.
So assuming that $\tp(a/Ab)$ descends to $Ab_0$ it remains for us to show that $b\in\acl(Ab_0)$.
Let $a_0$ be as given by Claim~\ref{descentequiv}.
Since $p$ is assumed to not be almost internal to $\PP$ and $a\in\acl(Aba_0)$, we must have $a_0\notin\Int_A(\PP)$.
By (i), $a\in\acl(Aa_0)$ and hence $b\in\acl(Aa_0)$.
But $a_0\ind_{Ab_0}\Int_A(\PP)$ by the claim, so that $b\in\acl(Ab_0)$, as desired.

Conversely, suppose that~(ii) holds and $c\in\dcl(Aa)\setminus\Int_A(\PP)$.
We need to show that $a\in\acl(Ac)$.
As $\tp(a/Ab)$ has no proper fibrations and $c\notin\acl(Ab)$, we must have $a\in\acl(Abc)$.
Let $b_0$ be the $\PP$-reduction of $c$ over $A$.
By Claim~\ref{descentequiv} we have that $\tp(a/Ab)$ almost descends to $Ab_0$, and so by~(ii), $b\in\acl(Ab_0)\subset\acl(Ac)$.
So $a\in\acl(Ac)$, as desired.
\end{proof}

\bigskip
\section{Consequences for $\ccm$}
\label{ccm-section}

\noindent
In this section we discuss the above notions in the particular case of the theory $\ccm$, from which, in any case, the ideas in this paper stem.
As we shall see, we recover the results of~$\S$2 of~\cite{cop} and even add a little to them.

First some preliminaries.
By a {\em fibration}  we will mean a dominant meromorphic map $f:X\to S$ between irreducible compact complex spaces whose general fibres are irreducible.
This differs slightly from standard terminology in that we insist on irreducibility rather than connectedness (see~\cite{cop}, for example).
Up to taking a normalisation of $X$ these notions will agree; and the reason for our stricter definition is the following characterisation.

\begin{fact}[Lemmas~2.7 and 2.11 of~\cite{ret}]
\label{stationarityinccm}
Work in a saturated model of $\ccm$.
Suppose $p=\tp(a/b)$, $X=\loc(a,b)$, $Y=\loc(b)$, and $\pi:X\to Y$ is the co-ordinate projection.
Then the following are equivalent:
\begin{itemize}
\item[(i)]
The general fibres of $\pi$ are irreducible.
\item[(ii)]
$p$ is stationary.
\item[(iii)]
The generic fibre $X_b$ is absolutely irreducible.
\end{itemize}
\end{fact}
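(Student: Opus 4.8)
The plan is to run the cycle (iii) $\Rightarrow$ (ii) $\Rightarrow$ (i) $\Rightarrow$ (iii), combining quantifier elimination for $\ccm$ on the model-theoretic side with one fact of complex-analytic geometry on the other: for a proper surjective morphism of irreducible compact complex spaces, the number of irreducible components of the fibre is a Zariski-constructible function of the point of the base. First I would set up the dictionary. Writing $X=\loc(a,b)\subseteq M\times N$ and $Y=\loc(b)\subseteq N$, one checks that $\loc(a/b)=X_b$: the inclusion $\loc(a/b)\subseteq X_b$ is clear since $X_b$ is a $b$-definable analytic set containing $a$, and for the reverse one uses that by total transcendence $\loc(a/b)$ equals $W_b$ for some $\emptyset$-definable analytic $W\ni(a,b)$, whence $X=\loc(a,b)\subseteq W$ and $X_b\subseteq W_b=\loc(a/b)$. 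Next one observes that every irreducible component of $X_b$ is a conjugate over $b$ of the component through $a$, that this component is exactly $\loc(a/\acl(b))$, and (by quantifier elimination, so that a complete type is the generic type of its locus) that $\tp(a/b)$ and $\tp(a/\acl(b))$ are the generic types of $X_b$ and of that component respectively. I read ``$X_b$ is absolutely irreducible'' as the assertion $\loc(a/\acl(b))=X_b$, i.e.\ that $X_b$ has no proper analytic decomposition over $\acl(b)$; note this is strictly stronger than irreducibility of $X_b$ as a $b$-definable set, which can fail to exclude a decomposition into several components conjugate over $b$.

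Granting the dictionary, (iii) $\Rightarrow$ (ii) is soft: if $\loc(a/\acl(b))=X_b=\loc(a/b)$ then a realisation of $\tp(a/b)$, being generic in the irreducible set $X_b$ over $b$, is also generic over $\acl(b)$, so $\tp(a/b)\vdash\tp(a/\acl(b))$ and $p$ is stationary. For (ii) $\Rightarrow$ (i): stationarity forces $\loc(a/\acl(b))=X_b$, so the generic fibre has a single geometric component; since the component-counting function on $Y$ is constructible and $b$ is generic in $Y$, its value $1$ at $b$ is the generic value, hence it equals $1$ on a dense Zariski-open of $Y$, which says the general fibres of $\pi$ are irreducible. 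Finally (i) $\Rightarrow$ (iii) is the same computation in reverse: over $\CC$ an irreducible fibre is automatically absolutely irreducible, so (i) gives that the component-counting function is identically $1$ on a dense Zariski-open, hence takes the value $1$ at the generic point $b$ because $b$ lies in every nonempty Zariski-open, hence $X_b$ is absolutely irreducible.

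The one step with genuine analytic content, and the place I expect the real obstacle, is the constructibility of the component-counting function on $Y$ --- equivalently, that the locus of points with irreducible fibre is Zariski-constructible --- for which I would appeal to the structure theory of proper morphisms of complex spaces. The remaining delicacy is bookkeeping: one must keep ``decomposes over $b$'' and ``decomposes over $\acl(b)$'' carefully apart when passing to the generic fibre, so that the word ``absolutely'' is matched on both sides of the equivalence; this mismatch is precisely what non-stationarity of $p$ detects.
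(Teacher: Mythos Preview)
The paper does not prove this statement at all: it is recorded as a \emph{Fact} with a citation to Lemmas~2.7 and~2.11 of~\cite{ret}, and no argument is given. So there is no proof in the paper to compare your proposal against.

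That said, your plan is the natural one and is essentially how the cited lemmas are proved. The dictionary you set up---$\loc(a/b)=X_b$, the irreducible components of $X_b$ being the $b$-conjugates of $\loc(a/\acl(b))$, and the identification of stationarity with the generic fibre having a single component---is exactly right, and the cycle (iii)$\Rightarrow$(ii)$\Rightarrow$(i)$\Rightarrow$(iii) goes through as you describe. You are also right that the only genuinely analytic input is the constructibility (or at least Zariski-openness in the base) of the locus of irreducible fibres; this is standard for proper surjective morphisms of complex spaces and is precisely what is invoked in~\cite{ret}. Your cautionary remark about distinguishing ``decomposes over $b$'' from ``decomposes over $\acl(b)$'' is well taken and is indeed where the content of the word ``absolutely'' lies.
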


Fibrations are well-behaved with respect to base change.
If $f:X\to S$ is a fibration and $g:T\to S$ is another dominant meromorphic map between irreducible compact complex spaces then the Zariski closure of
$$\{(a,b):\text{ $f$ is defined at $a$, $g$ is defined at $b$, and $f(a)=g(b)$}\}$$
in $X\times T$ has a unique irreducible component that projects onto $T$.
The projection of this component onto $T$ is a fibration whose general fibres agree with that of $f$.
We will denote this fibration by $X_{(T)}\to T$, and refer to it as the {\em strict pull back} of $X\to S$ in $X\times_ST\to T$.
Note that $X_{(T)}$ also projects onto $X$ and is maximal dimensional in $X\times_ST$.

In~\cite{cop} a fibration $f:X\to B$ is called {\em minimal} if whenever there is a factorisation
$$\xymatrix{
X\ar[d]_f\ar[rd]\\
B & Y\ar[l]
}$$
then either $\dim Y=\dim B$ or $\dim Y=\dim X$.
If $X=\loc(a)$ and $B=\loc(b)$, then it is not hard to see that $f$ is minimal if and only if $p=\tp(a/b)$ admits no proper fibrations in the sense of~$\S$\ref{nofibre-section}.
Our Proposition~\ref{nofibrations} then implies the following uniform version of Theorem~2.4(1) of~\cite{cop}.

\begin{proposition}
\label{minimalccm}
Suppose $X$ is an irreducible compact complex space of K\"ahler-type and $f:X\to B$ is minimal.
Then either
\begin{itemize}
\item[(I)]
the general fibres of $f$ are Moishezon; or,
\item[(II)]
the general fibres of $f$ are {\em uniformly isoptypically semi-simple} of algebraic dimension zero; that is, there is a commutative diagram
$$\xymatrix{
X\ar[dr]_f&Z\ar[l]_{\ p_X}\ar[r]^{p_Y}\ar[d] & Y\ar[dl]\\
&B
}$$
where $Y$ is of the form $Y'\times_BY'\times_B\cdots\times_BY'$ for some fibration $Y'\to B$ with general fibres simple of algebraic dimension zero, and $p_X,p_Y$ are generically finite surjective holomorphic maps.
\end{itemize}
\end{proposition}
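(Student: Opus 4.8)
The plan is to pass to the model-theoretic side, apply Proposition~\ref{nofibrations} to the generic fibre of $f$, and then read each of the two resulting cases back into complex geometry. Write $X=\loc(a)$ and $B=\loc(b)$ with $f(a)=b$. Since $X$ is of K\"ahler-type, all the loci that appear below remain compact complex spaces of K\"ahler-type, so that Fact~\ref{stationarityinccm} and the model theory of $\ccm$ apply to them. Because the general fibres of $f$ are irreducible, $p:=\tp(a/b)$ is stationary by Fact~\ref{stationarityinccm}, and, as noted just before the statement, minimality of $f$ is equivalent to $p$ admitting no proper fibrations. So Proposition~\ref{nofibrations}, applied over the base $b$, yields one of two cases.

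In case~(i) of Proposition~\ref{nofibrations}, $p$ is almost internal to a non locally modular minimal type $r_0$. Here I would invoke the Zilber-style dichotomy available in $\ccm$: every non locally modular minimal type is nonorthogonal to the (strongly minimal) generic type of $\PP(\CC)$. By the standard transfer of internality along nonorthogonality of minimal types, $p$ is then almost internal to $\PP(\CC)$, and hence internal to $\PP(\CC)$ by the remark in the Introduction that the two notions coincide in $\ccm$. Internality of the generic type of the fibre to $\PP(\CC)$ says exactly that the general fibre of $f$ is Moishezon, which is conclusion~(I).

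In case~(ii), $a$ is interalgebraic over $b$ with a $b$-independent tuple $(e_1,\dots,e_n)$ of realisations of a locally modular minimal type $r\in S(b)$. Put $Y':=\loc(e_1,b)$; since $r$ is stationary, the coordinate projection $Y'\to B$ is a fibration by Fact~\ref{stationarityinccm}, its general fibre has generic type a conjugate of $r$ and so is simple, and, using the same dichotomy in the form ``locally modular minimal $\Leftrightarrow$ orthogonal to $\PP(\CC)$'', that general fibre has algebraic dimension zero. Let $Y:=\loc(e_1,\dots,e_n,b)$; since the $e_i$ are $b$-independent and $r$ is stationary, $Y$ is the strict $n$-fold fibre power of $Y'\to B$, i.e.\ the unique irreducible component of $Y'\times_B\cdots\times_B Y'$ dominating $B$. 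Finally set $Z:=\loc(a,e_1,\dots,e_n,b)$ and let $p_X\colon Z\to X$ and $p_Y\colon Z\to Y$ be the coordinate projections. These are restrictions of the holomorphic projections of the ambient product of sorts, hence holomorphic; they are surjective, being dominant with compact source; they commute over $B$; and they are generically finite because $a\in\acl(be_1\cdots e_n)$ and $e_1,\dots,e_n\in\acl(ba)$, respectively. This produces the asserted diagram. It remains to note that the general fibre of $f$ itself has algebraic dimension zero: $p$ is interalgebraic over $b$ with $\tp(e_1\cdots e_n/b)$, which is $r$-internal, so $r\perp\PP(\CC)$ forces $\tp(e_1\cdots e_n/b)\perp\PP(\CC)$ and hence $p\perp\PP(\CC)$. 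Thus conclusion~(II) holds.

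Most of the argument is routine bookkeeping: identifying loci with fibrations via Fact~\ref{stationarityinccm}, and checking that $p_X$, $p_Y$, and $f\circ p_X$ are holomorphic, surjective, commuting, and generically finite. The one genuinely external ingredient, which I would cite rather than reprove, is the Zilber-style dichotomy for minimal types in $\ccm$ --- every non locally modular minimal type is nonorthogonal to $\PP(\CC)$, equivalently a locally modular minimal type is orthogonal to $\PP(\CC)$ --- together with the fact that almost internality to $\PP(\CC)$ coincides with internality in $\ccm$; combining these with Proposition~\ref{nofibrations} is the only step that is not pure translation. A small point to be careful about is that the $n$-fold fibre power can pick up spurious components over the discriminant of $Y'\to B$, which is why $Y$ must be taken to be the component dominating $B$.
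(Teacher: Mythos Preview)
Your proposal is correct and follows essentially the same route as the paper: translate minimality of $f$ into $\tp(a/b)$ having no proper fibrations, invoke Proposition~\ref{nofibrations}, and then read cases~(i) and~(ii) back geometrically via the Zilber dichotomy in $\ccm$. You in fact spell out the construction of $Z$, $Y'$, $Y$ and the projections more explicitly than the paper does. One point worth tightening: the passage from properties of the \emph{generic} fibre (over $b$ in a saturated model) to properties of the \emph{general} fibre (over points of $B$ in the standard model) is precisely where essential saturation of K\"ahler-type spaces is used; the paper flags this explicitly, and your opening sentence about loci remaining K\"ahler-type gestures at it but does not quite say it.
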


\begin{proof}
This is essentially a translation of~\ref{nofibrations} which tells us that one of two things can happen:
One possibility is that the generic type of the generic fibre of $f$ is $r$-semiminimal for some non locally modular minimal type $r$.
As a consequence of the manifestation of the Zilber dichotomy in $\ccm$, this will imply that the general fibres of $f$ are Moishezon (see Fact~3.1 of~\cite{cbp} or Proposition~4.4 of~\cite{ret}).
The other possibility is that the generic type of the generic fibre of $f$ is interalgebraic with a finite tuple of (independent) realisations of a minimal locally modular type.
We deduce from this the existence of the diagram in~(II) where $Y'\to B$ has the property that the generic type of the generic fibre is minimal locally modular.
As we are working in the K\"ahler case we have ``essentially saturation" (see~\cite{sat}), and so the {\em general} fibres of $Y'\to B$ will have minimal locally modular generic types.
Minimality yields simplicity of the general fibres, and modularity forces them to have algebraic dimension zero (see Remark~3.3 of~\cite{pillay2000}).
\end{proof}

In the second part of Theorem~2.4 of~\cite{cop} the authors use a structure theorem of Fujiki's (Theorem~1 of~\cite{fujiki83a}) on the algebraic reduction of compound Moishezon spaces of K\"ahler-type to conclude further that, in the case when $f$ is the algebraic reduction, case~(I) entails that the general fibre of $X\to B$ is an abelian variety.\footnote{In the statement of Theorem~2.4(2) of~\cite{cop} the authors fail to mention the assumption that $f$ is the algebraic reduction. However, it is not hard to find counterexamples if we drop this assumption, and inspecting the proof shows that the authors have in mind the case when $X$ is hyperk\"ahler nonalgebraic, in which case $f$ is the algebraic reduction.}
We can make model-theoretic sense of this too.

\begin{proposition}
\label{moish-abelian}
Let $X$ be a non-Moishezon irreducible compact complex space of K\"ahler-type, and $f:X\to B$ the algebraic reduction.
Suppose that $f$ is minimal.
If the general fibre of $f$ is Moishezon then it is in fact an abelian variety.
\end{proposition}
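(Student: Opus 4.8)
\smallskip
\noindent\textbf{Proof proposal.}\ \ The plan is to transfer the statement into the model-theoretic framework of~\S\S\ref{nofibre-section}--\ref{allpfibre-section}, squeeze out of the minimality hypothesis as much as possible by means of a definable automorphism group, and then appeal once to a structure theorem of Fujiki. Let $a$ realise the generic type of $X$, put $b:=f(a)$ and $p:=\tp(a/b)$, and let $X_b:=\loc(a/b)$ be the general fibre of~$f$; normalising $X$ if necessary we may assume that $f$ is a fibration in the sense of this section, so that $p$ is stationary by Fact~\ref{stationarityinccm}. Since $f$ is the algebraic reduction, $b$ is the $\PP(\CC)$-reduction of~$a$ (Moishezon-ness being precisely $\PP(\CC)$-internality of the generic type), and since $f$ is minimal, $p$ admits no proper fibrations, as noted just before Proposition~\ref{minimalccm}. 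Since the general fibre is Moishezon, $p$ is $\PP(\CC)$-internal; thus in Proposition~\ref{nofibrations} we are in case~(i), because case~(ii) would make $p$ interalgebraic over $b$ with a power of a locally modular minimal type, hence force $X_b$ to have algebraic dimension~$0$ (Remark~3.3 of~\cite{pillay2000}), contradicting that $X_b$ is positive-dimensional and Moishezon. (If $\dim X_b=0$ then $f$ is generically finite and $X$ is Moishezon, against hypothesis; so we may assume $\dim X_b>0$.) In sum, $X_b$ is a positive-dimensional Moishezon --- hence, after a bimeromorphic modification amounting to passage to a minimal model of the general fibre, projective --- compact complex space whose generic type admits no proper fibrations.

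Next I would bring in the connected automorphism group $\aut^0(X_b)$: a connected complex Lie group, definable in $\ccm$ over~$b$ (using that $p$ is stationary, so $X_b$ is defined over $\dcl(b)$), acting meromorphically --- and, after a bimeromorphic adjustment, regularly --- on $X_b$. The first step is to show that this action is either trivial or transitive. The orbit equivalence relation presents $X_b$ as a fibration over its meromorphic orbit space whose general fibre is a general orbit; a general orbit of a connected group is irreducible, and is positive-dimensional unless the group acts trivially; and if the action were non-trivial but not transitive, the orbit space would be positive-dimensional too, so translating back through the dictionary between meromorphic and definable maps in $\ccm$ (\cite{ret}) we would obtain a proper fibration of~$p$, a contradiction. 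If the action is transitive then $X_b$ is a compact K\"ahler homogeneous space, so by the Borel--Remmert theorem $X_b\cong F\times T$ with $F$ a generalised flag manifold and $T$ a complex torus. Running the two projections $X_b\to T$ and $X_b\to F$ against ``no proper fibrations'' forces one of $F,T$ to be a point. If $T$ were a point, $X_b=F$ would be a flag manifold, hence Fano, so the relative anticanonical bundle of $f$ would be $f$-ample; combined with an ample bundle pulled back from the projective base~$B$ this would make $X$ projective, against hypothesis. Hence $T=X_b$ is a complex torus, and a Moishezon complex torus is an abelian variety.

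It remains to rule out the case that $\aut^0(X_b)$ acts trivially, that is, that $X_b$ carries no non-zero holomorphic vector field --- the ``rigid'' Moishezon fibres, such as K3 surfaces, varieties of general type, or Kummer-type quotients of simple abelian varieties. Here lies the one genuinely complex-analytic step: since both $B$ and the general fibre $X_b$ are Moishezon, $X$ is a compound Moishezon manifold of K\"ahler-type, and Fujiki's structure theorem for the algebraic reduction of such manifolds (Theorem~1 of~\cite{fujiki83a}) shows that a positive-dimensional rigid Moishezon fibre cannot occur unless $X$ itself is Moishezon; as $X$ is non-Moishezon, this case does not arise. Therefore $X_b$ is an abelian variety. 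The main obstacle I anticipate is pinning down exactly which form of Fujiki's theorem does the work here, and, relatedly, checking that the automorphism-group dichotomy of the previous paragraph has genuinely reduced matters to precisely this residual rigid case; a subsidiary, more routine point is to keep track of the bimeromorphic modifications so that ``abelian variety'' is obtained on the nose for a minimal model of the general fibre, and to confirm that $\aut^0(X_b)$ is $\ccm$-definable and that the compact-K\"ahler Borel--Remmert classification applies as it is used above.
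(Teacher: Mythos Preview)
Your approach diverges from the paper's at the decisive step: you work with the holomorphic automorphism group $\aut^0(X_b)$, whereas the paper uses the \emph{binding group} $G=\aut\big(p/\PP(\CC)\big)$ furnished by the $\PP(\CC)$-internality of $p=\tp(a/b)$. This matters. Since $b$ is the $\PP(\CC)$-reduction of $a$ one has $a\ind_b\PP(\CC)$, and the general theory of internality then guarantees that $G$ acts \emph{transitively} on $V:=p^{\overline M}$. Your ``rigid'' case---$\aut^0(X_b)$ trivial---therefore simply does not arise in the paper's argument, and there is no residual appeal to~\cite{fujiki83a} of the sort you correctly flag as the main obstacle.

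With transitivity in hand, the paper takes the Chevalley decomposition of the connected algebraic group $G$: let $L$ be its maximal connected linear normal subgroup, so $G/L$ is an abelian variety. Minimality of $f$ is then deployed much as in your orbit argument, but against $L$ rather than the whole group: the code for $L\cdot a$ would give a proper fibration of $\tp(a/b)$ unless $L$ fixes $a$ or acts transitively. Transitivity of $L$ is excluded by the one analytic input actually needed, Proposition~7 of~\cite{fujiki83}, which says that a fibre almost homogeneous under a linear group makes $f$---hence $X$---Moishezon. So $L$ is trivial, $G$ is an abelian variety $A$, and the remaining work is to upgrade the a priori only definable transitive action of $A$ on $V\subseteq X_b$ to a holomorphic one, whence $V=X_b$ is an abelian variety.

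Your own transitive branch has a soft spot as well: the relative-anticanonical argument for excluding flag-manifold fibres presumes that $-K_{X/B}$ extends to an $f$-ample line bundle on all of $X$, which is not automatic for a merely meromorphic $f$ on a possibly singular $X$. The paper's route via almost-homogeneous unirational fibres and~\cite{fujiki83} sidesteps this. In short, the gap is the choice of group: the binding group, not $\aut^0(X_b)$, is what lets the model theory dispose of everything except one clean analytic citation.
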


\begin{proof}
We use the theory of internality and follow roughly the ideas of the second author in~\cite{pillay2000} (Fact~5.1).
Since we are in a K\"ahler-type space, we can work in a fixed full countable language for $X$ with respect to which our structure is saturated (see~\cite{sat}).
Let $a$ be generic in $X$, $b:=f(a)$, and $V$ the set of realisations of $\tp(a/b)$.
That the general fibre of $f$ is Moishezon implies that $V$ is internal to $\PP(\CC)$, and hence by the theory of internality (cf.~$\S$7.4 of~\cite{pillay96}) is $b$-definably and faithfully acted upon by a $b$-definable group $G$
that is definably isomorphic to a connected algebraic group.
The fact that $f$ is the algebraic reduction implies that $a\ind_b\PP(\CC)$, and hence the action is transitive.
Now $G$ has a unique maximal definable subgroup that is definably isomorphic to a connected linear algebraic group, let's call it $L$.
Note that $L$ must be $b$-definable and normal, and that the quotient $G/L$ is a $b$-definable group definably isomorphic to an abelian variety.

We first rule out the possibility that $L$ acts transitively on $V$.
If it did, then the generic fibre $X_b$ of $f$ would be almost homogeneous unirational.
This implies that $f$ is Moishezon (see Proposition~7 of~\cite{fujiki83}, this is where K\"ahler-type is used).
Since $B$ is Moishezon this would contradict $X$ being non-Moishezon.
So $L$ does not act transitively on $V$.

Next we argue that $L$ must fix $a$; we show that $L$ moving $a$ is ruled out by the minimality of $f$.
Let $c$ be a code for $L\cdot a$.
So $c\in\dcl(a)$.
Note that for any $a'\in V$, since the type of $a'$ and $a$ over $b$ agree, the code for $L\cdot a'$ has the same type as $c$ over $b$.
Since $L$ does not act transitively it will have infinitely many orbits in $V$ (by stationarity), and so $c\notin\acl(b)$.
On the other hand, note that if $a'\in L\cdot a$ then $a'$ and $a$ have the same type over $bc$.
If $L$ does not stabilise $a$ the orbit $L\cdot a$ is infinite (by connectedness), and hence $a\notin\acl(bc)$.
But $\tp(a/b)$ has no proper fibrations as $f$ is minimal, and that rules out the possibility of such a $c$.

It must therefore be the case that $L$ fixes $a$.
Since $V$ is a complete type over $b$ and everything is $b$-definable, $L$ must stabilise all of $V$.
By the faithfulness of the action, $L$ is thus trivial, and $G$ is definably isomorphic to some abelian variety $A\subseteq\PP^n(\CC)$.
We thus have a definable transitive action of $A$ on $V$.

It remains to show that this action of $A$ on $V$ is holomorphic.
Indeed, note first of all that $V$ is a Zariski open subset of $X_b$; this follows from saturation because $V$ is a definable set (it is the orbit of $a$ under the action of $A$) and also an intersection of Zariski open subsets of $X_b$ (as $\tp(a/b)$ is generic in $X_b$).
Hence the question of whether the action is holomorphic or not makes sense.
If it were holomorphic then by quantifier elimination there is a meromorphic map $\phi:A\times X_b\to X_b$ that agrees with the action of $A$ on $A\times V$.
But as the action is transitive, it follows that $V=\phi(A\times\{a\})$ is Zariski closed in $X_b$ and hence equal to $X_b$. So we have a transitive holomorphic action of $A$ on $X_b$, which forces $X_b$ to be an abelian variety.

So we prove that the action of $A$ on $V$ is holomorphic.
First of all, because of the commutativity of $A$, the parameters over which $A$ and its action on $V$ are defined can be taken to be a tuple $b'\supseteq b$ from $\PP(\CC)$.
Now, by quantifier elimination, the action restricted to some nonempty $b'$-definable Zariski open subset $U\subseteq A\times V$ is holomorphic.
But as $\tp(a/b)\vdash\tp(a/\PP)$ and both $b'$ and $A$ live in $\PP(\CC)$, this $U$ can be taken to be of the form $A'\times V$ where $A'$ is a nonempty $b'$-definable Zariski open subset of $A$.
It follows that for any $x\in A'$, the action of $A$ restricted to $(x+A')\times V$ is holomorphic as it is given by $(x+y,v)\mapsto x\cdot(y\cdot v)$.
But such translates of $A'$ by elements of $A'$ cover all of $A$.
So our action of $A$ on $V$ is holomorphic, as desired.
 \end{proof}

Next we consider the complex-analytic content of Theorem~\ref{characterisepfibre}.
The role of $\mathbb P$ here is played by the projective line sort.
The first thing to notice is that $\PP$-reductions in the sense of Definition~\ref{preduction} when specialised to $\ccm$ agree with algebraic reductions.
That is, 
{\em $b$ is the $\PP$-reduction of $a$ if and only if $b=f(a)$ where $f$ is the algebraic reduction map on $X:=\loc(a)$.}
Indeed, this follows from the fact that $\tp(b)$ is almost internal to $\PP$ if and only if $\loc(b)$ is Moishezon -- see Fact~3.1 of~\cite{cbp}, for example.

We now recall bimeromorphic descent for fibrations.

\begin{definition}[Descent for fibrations]
\label{descentfibre}
Suppose $h:X\to T$ is a fibration and $g:T\to S$ is a dominant meromorphic map.
We say that $h$ {\em descends to $S$} if there exists a fibration $\hat h:\widehat X\to S$, such that $X$ is bimeromorphically equivalent to $\widehat X_{(T)}$ over $T$.
In diagrams
$$\xymatrix{
X\ar[r]^{\approx\ \ \ }\ar[rd]_h & \widehat X_{(T)}\ar[d] & \widehat X\ar[d]^{\hat h}\\
& T\ar[r]_g&S
}$$
If instead of a bimeromorphic equivalence we only have that $X$ admits a generically finite dominant meromorphic map to $\widehat X_{(T)}$, then we say that $h$ {\em almost descends} to~$S$.
\end{definition}

This specialises the notion of descent for types introduced in Definition~\ref{descent}.

\begin{lemma}
\label{descentalgred}
Suppose $X=\loc(a)$, $h:X\to T$ is a fibration and $g:T\to S$ is a dominant meromorphic map.
The following are equivalent:
\begin{itemize}
\item[(i)]
$h$ almost descends to $S$,
\item[(ii)]
$\tp\big(a/h(a)\big)$ almost descends to $g\big(h(a)\big)$. 
\end{itemize}
\end{lemma}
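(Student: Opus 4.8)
The plan is to translate the geometric statement about fibrations and strict pull-backs into the model-theoretic language of Definition~\ref{descent}, and then invoke Claim~\ref{descentequiv} (or rather its analogue, applied directly, since here the base $S$ need not be the $\PP$-reduction) together with Fact~\ref{stationarityinccm}. Set $b := h(a)$, $Y := \loc(a,b) = X$ viewed over $Y_0 := \loc(b) = T$, and $s := g(b)$, $S = \loc(s)$. The type $\tp(a/b)$ is stationary by Fact~\ref{stationarityinccm}, since $h$ is a fibration.

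First I would prove (i)$\implies$(ii). Suppose $h$ almost descends to $S$, witnessed by a fibration $\hat h:\widehat X\to S$ and a generically finite dominant meromorphic map $\psi:X\to \widehat X_{(T)}$ over $T$. Pick $a$ generic in $X$ and let $a_0$ be the image of $a$ under the composition $X\to \widehat X_{(T)}\to \widehat X$; then $a_0\in\dcl(a)$, and $a_0$ is generic in $\widehat X$ with $\hat h(a_0) = s$ (because the diagram commutes: $\widehat X_{(T)}\to T\to S$ equals $\widehat X_{(T)}\to\widehat X\to S$). Since $\psi$ is generically finite and the map $\widehat X_{(T)}\to\widehat X$ is generically finite onto its image fibre-by-fibre — more precisely, over a generic fibre of $T\to S$ the strict pull-back $\widehat X_{(T)}$ is generically finite over $\widehat X$ by the defining property of strict pull-back — we get $a\in\acl(b\,a_0)$. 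So $\tp(a/b)^{\overline M}\to\tp(a_0/b)^{\overline M}$ is finite-to-one and surjective. Finally $\tp(a_0/s)$ is stationary (again Fact~\ref{stationarityinccm}, as $\hat h$ is a fibration), and $a_0\ind_s b$ because $a_0$ is generic in $\widehat X$ over $s$ while $b = h(a)$ specialises to a generic point of $T$ over $s$ and $a_0$ lives over $T$ only through $\widehat X$ — this independence is exactly the statement that $\loc(a_0,b)$ over $S$ realises the generic fibre of $T\to S$ paired generically with the generic fibre of $\widehat X\to S$, which holds because $\widehat X_{(T)}$ is maximal-dimensional in $\widehat X\times_S T$. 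Thus $\tp(a_0/s)$ with its nonforking extension to $b$ witnesses that $\tp(a/b)$ almost descends to $s = g(h(a))$.

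Conversely, for (ii)$\implies$(i), suppose $\tp(a/b)$ almost descends to $Ab_0$ with $b_0 = s = g(b)$, say via $q\in S(s)$ and a finite-to-one surjection $f:\tp(a/b)^{\overline M}\to q_b^{\overline M}$. Put $a_0 := f(a)$, so $a_0\in\dcl(a)$, $a\in\acl(b\,a_0)$, $a_0\ind_s b$, and $\tp(a_0/s)$ is stationary. Let $\widehat X := \loc(a_0,s)$ over $S$ with $\hat h:\widehat X\to S$ the projection; it is a fibration by Fact~\ref{stationarityinccm} (stationarity of $\tp(a_0/s)$). The independence $a_0\ind_s b$ means $\loc(a_0,b)$ over $S$ is the generic-by-generic locus, so $\loc(a_0,b)$ is (up to the unique component dominating $T$) the strict pull-back $\widehat X_{(T)}$. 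The inclusions $\dcl(a_0,b)\subseteq\dcl(a,b)$ and $a\in\acl(a_0,b)$ then give a generically finite dominant meromorphic map $X = \loc(a,b)\to\loc(a_0,b) = \widehat X_{(T)}$ over $T$, which is precisely what "$h$ almost descends to $S$" demands.

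The main obstacle I anticipate is the careful bookkeeping relating \emph{generic finiteness of meromorphic maps} to \emph{algebraicity in the model-theoretic sense}, and in particular making rigorous the two independence/finiteness claims: that $a_0\ind_s b$ corresponds to $\loc(a_0,b)$ being the maximal-dimensional (strict pull-back) component of $\widehat X\times_S T$, and that the strict pull-back $\widehat X_{(T)}$ is generically finite over $\widehat X$ along fibres of $T\to S$. Both are routine consequences of the definition of strict pull-back recalled just before the lemma and of the dimension–$U$-rank correspondence in $\ccm$, but they are the crux of the translation and deserve a sentence or two each rather than being asserted. Everything else is formal manipulation with $\dcl$, $\acl$, stationarity (Fact~\ref{stationarityinccm}), and the definitions of descent (Definitions~\ref{descent} and \ref{descentfibre}).
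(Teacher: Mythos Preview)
Your proposal is correct and follows essentially the same route as the paper: set $b=h(a)$, $b_0=g(b)$, and in each direction produce the element $a_0$ (as $f(a)$ or as the $\widehat X$-coordinate of the image in $\widehat X_{(T)}$), then check that $a_0\in\dcl(a)$, $a\in\acl(ba_0)$, $a_0\ind_{b_0}b$, and $\tp(a_0/b_0)$ is stationary, using Fact~\ref{stationarityinccm} and the maximal-dimensionality of the strict pull-back. The paper's write-up is terser---it simply asserts the independence $a_0\ind_{b_0}b$ where you spell out the dimension count---and it works directly with the pair $(a_0,b)\in\widehat X_{(T)}$ rather than composing with the projection to $\widehat X$, but the arguments are the same; your reference to Claim~\ref{descentequiv} is unnecessary (as you yourself note) and can be dropped.
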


\begin{proof}
Let $b:=h(a)$ and $b_0:=g(b)\in\dcl(b)$.
If $\tp(a/b)$ almost descends to $b_0$, then there is $a_0\in\dcl(a)$ such that $a\in\acl(ba_0)$, $a_0\ind_{b_0}b$, and $\tp(a_0/b_0)$ is stationary. Let $\widehat X:=\loc(a_0,b_0)$ and $\hat h:\widehat X\to S$ be the second co-ordinate projection.
By stationarity, $h$ is a fibration, and by independence, $\widehat X_{(T)}$ is the locus of $\tp(a_0,b)$.
The fact that $a_0\in\dcl(a)$ and $a\in\acl(ba_0)$ implies therefore, that there is a generically finite meromorphic map from $X$ to $\widehat X_{(T)}$ over $T$, as desired.
 
For the converse, assume that $h$ almost descends to $S$ witnessed by $\hat h:\widehat X\to S$ and a generically finite surjective map $f:X\to \widehat X_{(T)}$ over $T$.
Let $a_0\in\widehat X_{b_0}$ be such that $f(a)=(a_0,b)$.
Note that $a_0$ is then independent from $b$ over $b_0$, and $\tp(a_0/b_0)$ is stationary by Fact~\ref{stationarityinccm}.
Now $f$ restricts to a finite-to-one surjective $b$-definable map from the realisations of $\tp(a/b)$ to that of $\tp(a_0/b)$, as desired.
\end{proof}

Using this lemma, Theorem~\ref{characterisepfibre} readily specialises to:

\begin{theorem}
\label{characterisepfibreccm}
Suppose $X$ is a non-Moishezon, irreducible, compact complex space.
Then the following are equivalent:
\begin{itemize}
\item[(i)]
Whenever $X\to Y$ is a dominant meromorphic map, either $Y$ is Moishezon or $\dim Y=\dim X$.
\item[(ii)]
The algebraic reduction map $X\to B$ is minimal and does not almost descend to any $S$ with $\dim S<\dim B$.
\end{itemize}
\end{theorem}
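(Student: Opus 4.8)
The plan is to run the translation between $\ccm$ and model theory developed in this section and then quote Theorem~\ref{characterisepfibre}. First I would fix a generic point $a$ of $X$ over $\acl(\emptyset)$ and put $p:=\stp(a)$, so that $X=\loc(a)$, $p$ is stationary (because $X$ is irreducible, by Fact~\ref{stationarityinccm}), and $p$ is not almost internal to $\PP$ (because $X$ is non-Moishezon, by the characterisation of Moishezon loci recalled above). Taking $f\colon X\to B$ to be the algebraic reduction and $b:=f(a)$, the remark preceding Definition~\ref{descentfibre} identifies $b$ with the $\PP$-reduction of $a$. So the hypotheses of Theorem~\ref{characterisepfibre} hold, and it suffices to check that condition~(i) of the present statement is condition~(i) of that theorem, and likewise for~(ii).

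The translation of~(i) is immediate: a dominant meromorphic map from $X$ to an irreducible $Y$ is, up to bimeromorphism of $Y$, the same datum as an element $c\in\dcl(a)$ with $Y=\loc(c)$; here $\dim Y=\dim X$ precisely when $a\in\acl(c)$, and $Y$ is Moishezon precisely when $\stp(c)$ is almost $\PP$-internal. Hence~(i) here says exactly that whenever $c\in\dcl(a)$ and $a\notin\acl(c)$ then $\stp(c)$ is almost $\PP$-internal, i.e.\ that all fibrations of $p$ are over $\PP$.

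For~(ii), I would use that $f$ being minimal is equivalent to $\tp(a/b)$ admitting no proper fibrations (observed before Proposition~\ref{minimalccm}), and then argue that $f$ fails to almost descend to every dominant image $S$ of $B$ with $\dim S<\dim B$ if and only if $\tp(a/b)$ fails to almost descend to every relatively algebraically closed proper subset of $\dcl(b)$ containing $\acl(\emptyset)$. In one direction: if $f$ almost descends to such an $S$ along $g\colon B\to S$, then Lemma~\ref{descentalgred} gives that $\tp(a/b)$ almost descends to $\dcl(b_0)$ where $b_0:=g(b)$, and $\dim S<\dim B$ means $b\notin\acl(b_0)$; enlarging the base --- which preserves almost-descent, since the nonforking extension to $\dcl(b)$ of a stationary type over a smaller set coincides with that of its nonforking extension to the larger set --- one gets that $\tp(a/b)$ almost descends to the relatively algebraically closed proper subset $\acl(b_0)\cap\dcl(b)$. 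In the other direction: if $\tp(a/b)$ almost descends to a relatively algebraically closed proper subset $C\ni\acl(\emptyset)$, I would apply Claim~\ref{descentequiv} to produce $a_0\in\dcl(a)$ with $a\in\acl(ba_0)$ whose $\PP$-reduction $b_0$ over $\acl(\emptyset)$ lies in $C$ (this uses relative algebraic closedness of $C$ together with the independence properties of the $\PP$-reduction); then $b_0$ is a finite tuple in $\dcl(b)$ with $\dim\loc(b_0)<\dim B$ (by properness of $C$), $\tp(a/b)$ almost descends to $\dcl(b_0)$, and Lemma~\ref{descentalgred} turns this into almost-descent of $f$ to $\loc(b_0)$. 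Combining these, condition~(ii) here is condition~(ii) of Theorem~\ref{characterisepfibre}, and the result follows.

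The step I expect to be the real work is this reconciliation of the descent conditions in~(ii): Lemma~\ref{descentalgred} only directly converts descent of $f$ along one given dominant map into descent of $\tp(a/b)$ to a set of the special form $\dcl(g(b))$, whereas Theorem~\ref{characterisepfibre}(ii) ranges over all relatively algebraically closed proper subsets of $\dcl(b)$. Bridging the gap rests on three small observations --- that almost-descent is monotone under enlarging the base; that, by finite-dimensionality of $B$, every relatively algebraically closed subset of $\dcl(b)$ is the relative algebraic closure of $\dcl$ of a finite tuple; and that the $\PP$-reduction over $\acl(\emptyset)$ of any element of $\dcl(a)$ again lies in $\dcl(b)$ --- each routine but needing to be stated with care. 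The remaining ingredients (generic points versus dominant meromorphic maps, Moishezon versus almost $\PP$-internal, minimality versus no proper fibrations) are facts already recorded in this section.
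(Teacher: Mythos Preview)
Your proposal is correct and takes the same approach as the paper: translate the geometric statements into model-theoretic ones via the dictionary developed in this section and then invoke Theorem~\ref{characterisepfibre}. The paper's own proof is the single line ``Using this lemma, Theorem~\ref{characterisepfibre} readily specialises to'', so your version is simply more explicit --- in particular, your care in reconciling the descent condition in~(ii) here (ranging over dominant images $S$ of $B$) with the one in Theorem~\ref{characterisepfibre}(ii) (ranging over relatively algebraically closed proper subsets of $\dcl(b)$) is warranted, but the paper absorbs it into ``readily''.
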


The following corollary thus recovers most of what is done in~$\S2$ of~\cite{cop}, but with additional uniformity and the new observation about descent.

\begin{corollary}
\label{hyperkaehler}
If $X$ is a nonalgebraic generalised hyperk\"ahler manifold then the algebraic reduction $X\to B$ is minimal and does not almost descend to any $S$ with $\dim S<\dim B$.
Moreover, either the general fibre of $X\to B$ is an abelian variety or it is uniformly isotypically semi-simple of algebraic dimension $0$ (that is, {\em (II)} of Proposition~\ref{minimalccm} holds).
\end{corollary}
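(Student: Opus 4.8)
The plan is to assemble the complex-analytic specialisations established above; the corollary is essentially a bookkeeping consequence of Theorem~\ref{characterisepfibreccm}, Proposition~\ref{minimalccm}, and Proposition~\ref{moish-abelian}. I would begin by recording that a (generalised) hyperk\"ahler manifold is a compact K\"ahler manifold, hence an irreducible compact complex space of K\"ahler-type, and that for such spaces ``nonalgebraic'' is synonymous with non-Moishezon (a Moishezon K\"ahler manifold being projective). Thus $X$ meets the standing hypotheses of all three cited results, and we may take the algebraic reduction $X\to B$ to be a fibration in the sense of Section~\ref{ccm-section}, so that ``minimal'' and ``almost descends'' make sense for it.

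The single external ingredient is Theorem~2.3(1) of~\cite{cop}: a generalised hyperk\"ahler manifold $X$ has the property that every dominant meromorphic map $X\to Y$ either has $Y$ Moishezon or has $\dim Y=\dim X$. This is precisely condition~(i) of Theorem~\ref{characterisepfibreccm}. Since $X$ is non-Moishezon, Theorem~\ref{characterisepfibreccm} then yields condition~(ii): the algebraic reduction $X\to B$ is minimal and does not almost descend to any $S$ with $\dim S<\dim B$. This is the first assertion of the corollary.

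For the ``moreover'' clause I would feed the minimality of $X\to B$ into Proposition~\ref{minimalccm}, which tells us that either its general fibre is Moishezon, or its general fibre is uniformly isotypically semi-simple of algebraic dimension zero, that is, case~(II) of Proposition~\ref{minimalccm} holds. In the latter situation there is nothing more to prove. In the former I would invoke Proposition~\ref{moish-abelian}: $X$ is non-Moishezon of K\"ahler-type, $X\to B$ is the algebraic reduction and is minimal, and its general fibre is Moishezon; hence that general fibre is an abelian variety. This gives the stated dichotomy and completes the argument.

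There is no substantive obstacle: all the work is already done in the three results being cited and in the interface material (Fact~\ref{stationarityinccm}, Lemma~\ref{descentalgred}, and the identification of the $\PP$-reduction with the algebraic reduction). The only care needed is to check, as above, that the hypotheses of Theorem~\ref{characterisepfibreccm}, Proposition~\ref{minimalccm} and Proposition~\ref{moish-abelian} genuinely apply to $X$ and to $X\to B$, and that Theorem~2.3(1) of~\cite{cop} is read correctly as condition~(i) of Theorem~\ref{characterisepfibreccm} -- but all of this is routine given the discussion preceding the statement.
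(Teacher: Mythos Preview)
Your proposal is correct and follows exactly the paper's own proof: invoke Theorem~2.3(1) of~\cite{cop} to obtain condition~(i) of Theorem~\ref{characterisepfibreccm}, deduce~(ii), and then read off the ``moreover'' clause from Propositions~\ref{minimalccm} and~\ref{moish-abelian}. The paper's version is simply terser and omits the routine hypothesis-checking you spell out.
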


\begin{proof}
By 2.3(1) of~\cite{cop}, $X$ satisfies condition~(i) of Theorem~\ref{characterisepfibreccm}, and hence also condition~(ii).
The moreover clause is by Propositions~\ref{minimalccm} and~\ref{moish-abelian}.
\end{proof}

\bigskip
\section{Consequences for $\dcf$}
\label{dcf-section}

\noindent
In this section we specialise the results of $\S$\ref{allpfibre-section} to the context of differentially closed fields to obtain differential-algebraic geometric statements.
As the proofs are very much analogous to the complex geometric case dealt with above, we leave them out entirely.
We will illustrate the theorems with an example.
 
We work throughout in a sufficiently saturated model $(K,\delta)\models\dcf$ and over an algebraically closed $\delta$-subfield $k\subset K$.
The results of $\S$\ref{allpfibre-section} apply only to the stationary types of finite $U$-rank, and these are precisely the generic types of finite dimensional irreducible $\delta$-varieties over $k$.
Here, for $X$ an irreducible $\delta$-variety over~$k$, by {\em finite dimensional} we mean that the $\delta$-rational function field $k\langle X\rangle$ is of finite transcendence degree over $k$, and we denote this value by $\dim_\delta X$.

The role of $\mathbb P$ will be played by $\mathcal C$, the field of constants of $K$.
Suppose $X$ is a finite dimensional irreducible $\delta$-varieties over $k$ with $p(x)\in S(k)$ its generic type.
For $p(x)$ to be almost internal to $\C$ is equivalent to $X$ being in generically finite-to-finite correspondence with an algebraic variety in the constants.
More precisely, we say that $X$ is {\em $\C$-algebraic} if  there exists an irreducible algebraic variety $V$ over $\C$ and an irreducible $\delta$-variety $\Gamma\subset X\times V(\C)$ that projects generically finite-to-one onto both $X$ and $V(\C)$.
Note that $V$ and $\Gamma$ do not need to be defined over $k$.

\begin{remark}
Unlike the case of $\ccm$, being $\C$-algebraic is not equivalent to being $\delta$-birationally equivalent to $V(\C)$ for some algebraic variety $V$ over $\C$.
\end{remark}

By a {\em fibration of $X$ over $k$} we mean a $\delta$-variety $B$ over $k$ together with a $\delta$-dominant $\delta$-rational map $f:X\to B$ over $k$ whose generic fibre is (absolutely) irreducible.
A fibration is {\em minimal} if it does not factor through any proper fibration $X\to Z$ with $\dim_\delta X>\dim_\delta Z>\dim_\delta B$.
This is equivalent to the generic type of the generic fibre admitting no proper fibrations in the sense of Definition~\ref{nofibrationsdef}.
We have the following specialisation of Proposition~\ref{nofibrations} to $\dcf$.

\begin{proposition}
\label{minimaldcf}
Suppose $X$ is an irreducible finite dimensional $\delta$-variety over $k$ and $f:X\to B$ is a minimal fibration.
Let $b\in B$ be generic over $k$.
Then either
\begin{itemize}
\item[(I)]
$X_b$ is $\C$-algebraic; or,
\item[(II)]
$X_b$ is in generically finite-to-finite $\delta$-rational correspondence over $k\langle b\rangle$ with a $\delta$-variety of the form $Y^n$ where $Y$ is an irreducible $\delta$-variety over $k\langle b\rangle$ whose generic type is minimal and locally modular.
\end{itemize}
\end{proposition}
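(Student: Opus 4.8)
The plan is to deduce Proposition~\ref{minimaldcf} from Proposition~\ref{nofibrations} applied to the generic type of the generic fibre, exactly as Proposition~\ref{minimalccm} was deduced in the $\ccm$ setting, the one extra ingredient being the Zilber dichotomy for $\dcf$. Let $a$ be generic in $X$ over $k$ and set $b := f(a)$, so that $p := \tp\big(a/k\langle b\rangle\big)$ is the generic type of the generic fibre $X_b$. Since a fibration has absolutely irreducible generic fibre, $p$ is stationary; and since $f$ is minimal, $p$ admits no proper fibrations in the sense of Definition~\ref{nofibrationsdef} (this is the equivalence recorded in the paragraph preceding the proposition). Proposition~\ref{nofibrations} then applies to $p$, with base $A := k\langle b\rangle$, and yields its two cases.

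In case~(i) of Proposition~\ref{nofibrations}, $p$ is almost internal to a non locally modular minimal type $r$. By the Zilber dichotomy in $\dcf$, a non locally modular minimal type is nonorthogonal to the field of constants $\C$; since $r$ and the generic type of $\C$ are both minimal, nonorthogonality forces $r$, after a base extension, to be almost $\C$-internal, and transitivity of almost internality then gives that $p$ is almost $\C$-internal. As recalled at the beginning of this section, almost $\C$-internality of the generic type of $X_b$ is precisely the assertion that $X_b$ is $\C$-algebraic. This is conclusion~(I).

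In case~(ii) of Proposition~\ref{nofibrations}, $a$ is interalgebraic over $k\langle b\rangle$ with a $k\langle b\rangle$-independent tuple $(e_1,\dots,e_n)$ of realisations of some locally modular minimal type $q\in S\big(k\langle b\rangle\big)$. Put $Y := \loc\big(e_1/k\langle b\rangle\big)$. Then $Y$ is an irreducible $\delta$-variety over $k\langle b\rangle$ whose generic type is $q$, hence minimal and locally modular; it is finite dimensional because $e_1\in\acl\big(k\langle b\rangle a\big)$ while $a$ and $b$ are finite dimensional over $k$. Independence of the $e_i$ says that $(e_1,\dots,e_n)$ is generic in $Y^n$ over $k\langle b\rangle$, and the interalgebraicity of $a$ with $(e_1,\dots,e_n)$ over $k\langle b\rangle$ says that $\loc\big(a,e_1,\dots,e_n/k\langle b\rangle\big)$ is an irreducible $\delta$-subvariety of $X_b\times Y^n$ projecting generically finite-to-one onto each factor. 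That is exactly a generically finite-to-finite $\delta$-rational correspondence over $k\langle b\rangle$ between $X_b$ and $Y^n$, which is conclusion~(II).

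The only genuinely nonroutine step is the appeal to the Zilber dichotomy in case~(i): one must invoke both that a minimal type in $\dcf$ which is not locally modular is nonorthogonal to $\C$, and the elementary fact that nonorthogonality between minimal types upgrades to almost internality of the one to the other, so that transitivity of almost internality carries ``almost internal to $r$'' to ``almost internal to $\C$.'' Everything else is a routine translation of the model-theoretic conclusions of Proposition~\ref{nofibrations} into the language of finite dimensional $\delta$-varieties, entirely parallel to the proof of Proposition~\ref{minimalccm}.
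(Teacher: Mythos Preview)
Your proof is correct and is exactly the argument the paper has in mind: the authors explicitly omit the proof, saying ``the proofs are very much analogous to the complex geometric case dealt with above, we leave them out entirely,'' and your argument is precisely the specialisation of Proposition~\ref{nofibrations} via the Zilber dichotomy for $\dcf$, parallel to the proof of Proposition~\ref{minimalccm}.
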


Descent in the differential-algebraic geometric context takes the following form:
Given a fibration $f:X\to B$ and a $\delta$-dominant map $g:B\to S$ over $k$, we say that $f$ {\em almost descends to $S$} if there exists a fibration $\hat f:\widehat X\to S$ over $k$, such that $X$ admits a generically finite-to-one $\delta$-rational map onto $\widehat X_{(B)}$ over $B$.
Here $\widehat X_{(B)}$ is the irreducible component of $\widehat X\times_SB$ that projects $\delta$-dominantly onto $B$.

The {\em $C$-algebraic reduction of $X$ over $k$} is an irreducible $\C$-algebraic $\delta$-variety $B$ together with a fibration $f:X\to B$ over $k$, such that whenever $g:X\to Y$ is $\delta$-dominant $\delta$-rational over $k$ and $Y$ is $\C$-algebraic, then $g$ factors through $f$.
This is unique up to $\delta$-birational equivalence.
Note that if $a\in X$ is generic over $k$ then $\dcl\big(kf(a)\big)=\dcl(ka)\cap\Int_k(\C)$, so that $\C$-algebraic reductions are just the $\delta$-algebraic geometric manifestation of Definition~\ref{preduction}.

Theorem~\ref{characterisepfibre} specialises to:

\begin{theorem}
\label{characterisepfibredcf}
Suppose $X$ is an irreducible finite dimensional $\delta$-variety over $k$ that is not $\C$-algebraic.
Then the following are equivalent:
\begin{itemize}
\item[(i)]
All fibrations of $X$ are over $\C$.
That is, if $X\to Y$ is a fibration with $\dim_\delta Y<\dim_\delta X$, then $Y$ is $\C$-algebraic.
\item[(ii)]
The $\C$-algebraic reduction $X\to B$ is minimal and does not almost descend to any $S$ with $\dim_\delta S<\dim_\delta B$.
\end{itemize}
\end{theorem}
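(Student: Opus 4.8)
The plan is to deduce Theorem~\ref{characterisepfibredcf} from Theorem~\ref{characterisepfibre} in exactly the way Theorem~\ref{characterisepfibreccm} was deduced in $\S$\ref{ccm-section}, with the complex-analytic dictionary replaced by a differential-algebraic one. Fix $a\in X$ generic over $k$ and set $p:=\tp(a/k)$; this is a stationary type of finite $U$-rank, and the hypothesis that $X$ is not $\C$-algebraic says precisely that $p$ is not almost internal to $\PP:=\C$. Let $f:X\to B$ be the $\C$-algebraic reduction and $b:=f(a)$; as recorded just before the statement, $\dcl(kb)=\dcl(ka)\cap\Int_k(\C)$, so $b$ is the $\PP$-reduction of $a$ over $A:=k$ in the sense of Definition~\ref{preduction}. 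Theorem~\ref{characterisepfibre} therefore applies, and it remains only to translate each of its two clauses into the language of $\delta$-varieties.

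For clause~(i): I claim that ``all fibrations of $X$ are over $\C$'' in the stated sense is equivalent to ``all fibrations of $p$ are over $\PP$''. If $g:X\to Y$ is a fibration with $\dim_\delta Y<\dim_\delta X$, then $c:=g(a)\in\dcl(ka)$ is generic in $Y$ over $k$ and the generic fibre of $g$ has $\delta$-dimension $\dim_\delta X-\dim_\delta Y>0$, so $a\notin\acl(kc)$; hence if all fibrations of $p$ are over $\PP$ then $\stp(c/k)$ is almost $\C$-internal, which is to say $Y=\loc(c)$ is $\C$-algebraic. Conversely, given $c\in\dcl(ka)$ with $a\notin\acl(kc)$, a routine reduction (the $\dcf$-analogue of the normalisation remark accompanying Fact~\ref{stationarityinccm}) lets us replace $c$ by an interalgebraic tuple over $k$ for which $\tp(a/kc)$ is stationary; then $X\to\loc(c)$ is a genuine fibration, necessarily with $\dim_\delta\loc(c)<\dim_\delta X$, so by~(i) $\loc(c)$ is $\C$-algebraic, i.e.\ $\stp(c/k)$ is almost $\C$-internal.

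For clause~(ii): ``$\tp(a/kb)$ admits no proper fibrations'' is, by the remark on minimal fibrations preceding Proposition~\ref{minimaldcf}, exactly the minimality of $f:X\to B$. The remaining equivalence --- between ``$\tp(a/kb)$ does not almost descend to any relatively algebraically closed proper subset of $\dcl(kb)$ containing $k$'' and ``$X\to B$ does not almost descend to any $S$ with $\dim_\delta S<\dim_\delta B$'' --- is supplied by the $\dcf$-analogue of Lemma~\ref{descentalgred}: for $\delta$-dominant $g:B\to S$ over $k$, the fibration $f$ almost descends to $S$ if and only if $\tp(a/b)$ almost descends to $g(b)$. Its proof copies that of Lemma~\ref{descentalgred} line for line: from a type-level almost-descent one has $a_0\in\dcl(ka)$ with $a\in\acl(kba_0)$, $a_0\ind_{kb_0}b$ and $\tp(a_0/kb_0)$ stationary (where $b_0:=g(b)$), and one takes $\widehat X:=\loc(a_0,b_0)$ with $\hat f$ its projection to $\loc(b_0)$ --- stationarity makes $\hat f$ a fibration, independence identifies the strict pull-back $\widehat X_{(B)}$ with $\loc(a_0,b)$, and $a_0\in\dcl(ka)$ together with $a\in\acl(kba_0)$ yields the desired generically finite $\delta$-rational map $X\to\widehat X_{(B)}$ over $B$; the converse is the same computation read backwards. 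One then notes, as in the $\ccm$ case, that relatively algebraically closed proper $k$-subsets of $\dcl(kb)$ correspond up to $\delta$-birational equivalence to bases $S$ of fibrations $B\to S$ with $\dim_\delta S<\dim_\delta B$ --- the properness translating into $b\notin\acl(kb_0)$ --- and that (via Claim~\ref{descentequiv}) almost-descent is unaffected by replacing such a subset by its relative algebraic closure. Combining these translations with Theorem~\ref{characterisepfibre} gives the result.

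The main obstacle is not any single translation but making sure the differential-algebraic and model-theoretic sides match up uniformly: one must verify the $\dcf$-versions of Fact~\ref{stationarityinccm} (absolute irreducibility of generic fibres versus stationarity, up to a generically finite cover) and of the strict pull-back construction $\widehat X_{(B)}$, since the whole argument is routed through them. Once those are in hand, the rest is bookkeeping against Theorem~\ref{characterisepfibre} and the descent lemma.
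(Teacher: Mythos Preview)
Your proposal is correct and is precisely the approach the paper intends: Section~\ref{dcf-section} explicitly omits the proof as ``very much analogous to the complex geometric case'', so deducing Theorem~\ref{characterisepfibredcf} from Theorem~\ref{characterisepfibre} via the $\dcf$-analogue of Lemma~\ref{descentalgred} is exactly the argument the authors have in mind. Your write-up is in fact more detailed than the paper's own treatment of the $\ccm$ specialisation (Theorem~\ref{characterisepfibreccm}), which is also stated with only a one-line justification.
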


We conclude with an example that witnesses the non vacuity of Theorem~\ref{characterisepfibredcf}.

\begin{example}
\label{example}
{\em Suppose $k\subset\C$ and fix a simple abelian variety $A$ over $k$.
The logarithmic-derivative $\ell:A(K)\to T_0A(K)$ is a $\delta$-rational surjective homomorphism from $A$ to its Lie algebra, with kernel $A(\C)$.
(See $\S$3 of~\cite{markermanin} for details.)
Identifying $T_0A(K)$ with $\mathbb G^d_{\text{a}}(K)$, let $G:=\ell^{-1}\big(\mathbb G_{\text{a}}(\C)\times\{0\}^{d-1}\big)$ and set $\pi:=\ell|_G$.
We then have the following short exact sequence of connected finite dimensional $\delta$-algebraic subgroups of $A(K)$ over $k$,}
$$\xymatrix{
0\ar[r]&A(\C)\ar[r]&G\ar[r]^{\pi \ \ \ \ }&\mathbb G_{\text{a}}(\C)\ar[r]&0
}$$
Then $\pi$ is the $\C$-algebraic reduction of $G$, it is a minimal fibration, and it does not almost descend.
In particular, by Theorem~\ref{characterisepfibredcf}, all fibrations of $G$ are over $\C$.
\end{example}

\begin{proof}
We first show that $G$ is not $\C$-algebraic.
Indeed, if it were then in fact it would be definably isomorphic to the $\C$ points of some algebraic group over $\C$ -- see Corollary~3.10 of~\cite{pillay06} -- and hence would have to be isomorphic to $A(\C)\times\mathbb G_{\text{a}}(\C)$ as there are in algebraic geometry no nontrivial extensions of the additive group by an abelian variety.
So this gives rise to a Zariski-dense (by the simplicity of $A$) torsion-free $\delta$-definable subgroup of $A(K)$, namely $\mathbb G_{\text{a}}(\C)$.
This is impossible as any Zariski dense $\delta$-definable subgroup of a commutative and connected algebraic group must contain the torsion of that algebraic group (see Corollary~4.2 of~\cite{pillaydag}).

Let $p$ be the generic type of a generic fibre of $\pi$.
That is, $p(x)=\tp(a/kb)$ where $a\in G$ is generic over $k$ and  $b=\pi(a)$.
Note that $p$ is stationary and $\C$-internal since the fibres of $\pi$ are cosets of $A(\C)$.
We claim that 

\begin{claim}
\label{exampleclaim}
$A(\C)$ together with its action on $p(K)$ is (up to definable isomorphism) the binding group $\aut\big(p(K)/\C\big)$.
In particular, $p(K)=a+A(\C)$.
\end{claim}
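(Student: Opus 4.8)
The plan is to show that $\aut\big(p(K)/\C\big)$ is, as a group with its action on $p(K)$, exactly the translation action of $A(\C)$ on the coset $a+A(\C)$. Write $T:=\pi^{-1}(b)$. Since $\ker\pi=A(\C)$ and $G$ is commutative, $T=a+A(\C)$, a torsor under $A(\C)$, and $p(K)\subseteq T$ because $b\in\dcl(kb)$. The essential point, noted once and used throughout, is that $kb\subseteq\C$, so every $\sigma\in\aut(\overline M/\C)$ fixes $b$ and fixes $A(\C)\subseteq\C$ pointwise. I would establish (i) each such $\sigma$ acts on $T$, hence on $p(K)$, as translation by $c_\sigma:=\sigma(a)-a\in A(\C)$; (ii) conversely each translation by an element of $A(\C)$ is induced by some such $\sigma$; and then read off both the binding-group description and the equality $p(K)=a+A(\C)$.

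For (i): if $\sigma\in\aut(\overline M/\C)$ then $\pi\big(\sigma(a)\big)=\sigma(b)=b$, so $c_\sigma=\sigma(a)-a\in\ker\pi=A(\C)$; and for any $x=a+d\in T$ with $d\in A(\C)$ we get $\sigma(x)=\sigma(a)+d=x+c_\sigma$, using that $d$ is $\sigma$-fixed. The same computation shows $\sigma\mapsto c_\sigma$ is a group homomorphism $\aut(\overline M/\C)\to A(\C)$ whose kernel is precisely the pointwise stabiliser of $p(K)$; it therefore induces an injective homomorphism $\aut\big(p(K)/\C\big)\hookrightarrow A(\C)$ under which the binding-group action on $p(K)$ becomes the translation action restricted from $T$. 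This homomorphism is definable because the binding-group action on $p(K)$ is (cf.\ $\S$7.4 of~\cite{pillay96}).

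For (ii) I first need that $p$ is weakly orthogonal to $\C$; equivalently that $a\ind_{kb}\C$, i.e.\ $a$ is generic in $T$ over $\C$ and not merely over $kb$. Granting this, for $c\in A(\C)\subseteq\C$ the map $x\mapsto x+c$ is a $\C$-definable bijection of $T$, so $a+c$ is again generic in $T$ over $\C$; since $A(\C)$ is connected, $T$ has a unique generic type over $\C$, whence $\tp(a+c/\C)=\tp(a/\C)$. Thus there is $\sigma\in\aut(\overline M/\C)$ with $\sigma(a)=a+c$, realising translation by $c$. Together with (i) this gives $\aut\big(p(K)/\C\big)\cong A(\C)$ with the isomorphism carrying $c$ to translation by $c$. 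Finally $p(K)=a+A(\C)$: the inclusion $\subseteq$ is already noted, and for $\supseteq$, given $c\in A(\C)$ the $\sigma$ just produced permutes $p(K)$ and sends $a\in p(K)$ to $a+c$, so $a+c\in p(K)$. Under the identification above the binding-group action on $p(K)$ is then the regular translation action of $A(\C)$ on $a+A(\C)$, which is the claim.

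The main obstacle is the weak orthogonality of $p$ to $\C$ used in (ii) — equivalently, the transitivity of the binding-group action (which, of course, the final description forces). I expect to obtain it from the structure of $G$: for instance by showing $\dcl(ka)\cap\Int_k(\C)=\dcl(kb)$, i.e.\ that $b$ is the $\C$-reduction of $a$ over $k$, and invoking Remark~\ref{predrem}(2) to conclude $a\ind_{kb}\C$; or by a direct transcendence-degree computation on solutions of $\pi(x)=b$ over $\C$. The simplicity of $A$ enters here, ruling out an intermediate $\delta$-subgroup of $A(\C)$ through which additional constants could be extracted. Everything else in the argument is routine manipulation with the group law and the basic theory of internality and binding groups.
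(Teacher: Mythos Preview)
Your part~(i) --- that $\sigma\mapsto c_\sigma=\sigma(a)-a$ defines an injective homomorphism from the binding group into $A(\C)$, realised as translation on the fibre $T=a+A(\C)$ --- is correct and matches the paper's argument essentially verbatim (the paper writes $\phi(h)=a-ha$, but this is the same map up to sign).

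The gap you yourself flag in part~(ii) is real, and more serious than you suggest: in the paper's logical order, weak orthogonality $a\ind_{kb}\C$ is \emph{deduced from} this claim (via transitivity of the binding-group action), and is then used to conclude that $\pi$ is the $\C$-algebraic reduction. So proposing to establish surjectivity by first proving that $b$ is the $\C$-reduction of $a$, and then invoking Remark~\ref{predrem}(2), would be circular within the overall argument of the example. Your alternative ``direct transcendence-degree computation'' is left entirely unspecified.

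The paper's route to surjectivity avoids weak orthogonality altogether. The image $\phi(H)$ is a definable subgroup of $A(\C)$; since $A$ is simple and $H$ is connected, $\phi(H)$ is either trivial or all of $A(\C)$. Triviality of $\phi(H)$, together with injectivity, would make $H$ trivial, forcing $a\in\dcl(kb,\C)=\dcl(\C)$; but then the generic type of $G$ would be $\C$-internal, contradicting the non-$\C$-algebraicity of $G$ established \emph{before} the claim. Thus $\phi$ is onto, and $p(K)=a+A(\C)$ follows immediately. This is where simplicity of $A$ and the preparatory work on $G$ are actually cashed in --- not, as you speculate, in ruling out intermediate subgroups through which ``additional constants could be extracted''.
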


\begin{proof}[Proof of~\ref{exampleclaim}]
Let $H=\aut\big(p(K)/\C\big)$.
We show that $\phi:H\to A(\C)$ given by $\phi(h)=a-ha$ is an isomorphism.
Given $h,h'\in H$ we have
\begin{eqnarray*}
hh'a
&=&
h\big(a+\phi(h')\big)\\
&=&
\sigma_h\big(a+\phi(h')\big)\ \ \ \ \ \text{ where $\sigma_h\in\aut_{\C}(K)$ extends the action of $h$} \\
&=&
\sigma_h(a)+\phi(h') \ \ \ \ \ \text{ as $\phi(h')$ is a $\C$-point}\\
&=&
a+\phi(h)+\phi(h')
\end{eqnarray*}
showing that $\phi$ is a group homomorphism.
Next, suppose $ha=a$.
Then for any $a'\models p(x)$ we have $a'=a+c$ for some $c\in A(\C)$, so that $ha'=\sigma_h(a+c)=a+c=a'$.
This shows that $\phi$ is injective.
By simplicity of $A$, either $H$ is trivial or $\phi(H)=A(\C)$.
The former would imply that  $a\in G(\C)$, contradicting the fact that $G$ is not $\C$-algebraic.
Hence $\phi$ is surjective.
\end{proof}

One consequence of Claim~\ref{exampleclaim} is that the binding group acts transitively on $p(K)$.
Hence $a\ind_{kb}\C$.
It follows that $a\ind_{kb}\Int_k(\C)$ and so $kb$ is the canonical base of $\tp\big(a/\Int_k(\C)\big)$.
This proves that $\pi$ is the $\C$-algebraic reduction of $G$.

Now suppose that $\pi$ factors through a fibration $f:G\to Y$, and let $q(x)=\tp\big(a/kf(a)\big)$.
Then by Claim~\ref{exampleclaim}, $F=\aut\big(q(K)/\C\big)$ is a connected $\delta$-definable subgroup of $A(\C)$. 
If $F=A(\C)$ then $q(K)=p(K)$ and $Y\to\mathbb G_{\text{a}}(\C)$ is $\delta$-birational.
Otherwise, $F$ is trivial and $a\in\dcl(kf(a)\C)$.
But as $a\ind_{kf(a)}\C$, this forces $f$ to be $\delta$-birational.
We have shown that $\pi$ is a minimal fibration.

Finally, it remains to show that $\pi$ does not descend with respect to any $\delta$-dominant $\mathbb G_{\text{a}}(\C)\to S$ with $\dim_\delta S<\dim_\delta\mathbb G_{\text{a}}(\C)=1$.
But such an $S$ would then have to be a point, and descent would in this case mean that $\pi$ factors through a generically finite-to-one $\delta$-rational map $G\to \mathbb G_{\text{a}}(\C)\times \widehat G$.
It would follow that $A(\C)$ is a generically finite cover of $\widehat G$, making the latter $\C$-algebraic, and thereby contradicting the non-$\C$-algebraicity of $G$.
\end{proof}

\bigskip
\section{An aside on maximal covering families}
\label{appendix}

\noindent
As we saw in~$\S$\ref{ccm-section}, the specialisation of Proposition~\ref{nofibrations} to the theory $\ccm$ yields a uniform version of Theorem~2.4(1) of~\cite{cop}.
However, our proof was not abstracted from that in~\cite{cop}.
The argument there does not go via a minimal locally modular type.
Instead, the authors establish first a certain fact about maximal covering families of algebraic dimension zero compact K\"ahler manifolds (Lemma~2.7 of~\cite{cop}); namely that any such family is parametrised by a simple manifold, and only finitely many of the members of the covering family pass through a general point.
This result seems to be of independent interest, and is it turns out, can be given a model-theoretic explanation as follows.

\begin{proposition}
\label{maxfam}
{Suppose $T$ is the theory of a Zariski structure\footnote{This is in the sense of Zilber~\cite{zilberbook}.} with the $\cbp$\footnote{This is the``canonical base property", see~\cite{cbp} for details.}, and $p=\stp(a/A)$ is orthogonal to the set of non locally modular minimal types.
Let $q=\stp(a/E)$ be a forking extension of $p$ with maximal locus.\footnote{The {\em locus} of $\tp(a/E)$ is the smallest Zariski-closed set defined over $E$ containing $a$.}
Let $e=\cb(q)$.
Then $e\in\acl(Aa)$ and $U(e/A)=1$.}
\end{proposition}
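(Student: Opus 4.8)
The plan is to exploit the $\cbp$ together with orthogonality to non locally modular minimal types to force the canonical base $e=\cb(q)$ to behave like a $1$-based object of rank $1$. First I would recall what the $\cbp$ gives us: since $q=\stp(a/E)$, the canonical base property says that $\stp(e/a)$ is almost internal to the family $\PP_0$ of non locally modular minimal types (this is the content of the $\cbp$ as formulated in~\cite{cbp}, applied to $a$ and $E$, noting $e\in\dcl(E)$ up to interalgebraicity). On the other hand, $p=\stp(a/A)$ is orthogonal to $\PP_0$ by hypothesis, so $\stp(a/A)$ is foreign to $\PP_0$; I would then argue that $\stp(ea/A)$ being built from $a$ and something $\PP_0$-internal over $a$, combined with foreignness of $a$ to $\PP_0$, forces $e\in\acl(Aa)$. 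Concretely: let $d$ be $\PP_0$-internal over $a$ with $e\in\acl(Aad)$; since $\tp(a/A)$ is foreign to $\PP_0$ and $d$ is $\PP_0$-internal over $a$, one gets $d\ind_{Aa}(\text{realisations of }\PP_0)$ hence $\stp(d/Aa)$ is $\PP_0$-internal and foreign to $\PP_0$, so $d\in\acl(Aa)$, whence $e\in\acl(Aa)$.

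Next I would establish $U(e/A)=1$. The upper bound is the key point. Since $q$ is a forking extension of $p$ with \emph{maximal} locus among forking extensions, the locus drops by exactly one dimension, so intuitively $U(a/E)=U(a/A)-1$; more carefully, maximality of the locus should give that $\stp(a/E)$ is a minimal forking extension, i.e.\ $U(a/A)-U(a/E)=1$. Now $e=\cb(q)$ satisfies $e\in\acl(Ea)$ automatically and, by the Lascar inequalities together with $e\in\acl(Aa)$ (just proved),
$$U(e/A)=U(ea/A)-U(a/Ae)=U(a/A)-U(a/Ae).$$
Since $\stp(a/Ae)$ is a (nonforking-over-$E'$, forking-over-$A$) extension lying between $p$ and $q$, and $q$ does not fork over $Ae$, we get $U(a/Ae)\ge U(a/E)=U(a/A)-1$, hence $U(e/A)\le 1$. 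For the lower bound $U(e/A)\ge 1$: if $e\in\acl(A)$ then $q$ would not fork over $A$, contradicting that $q$ is a proper forking extension; so $U(e/A)\ge 1$, giving $U(e/A)=1$.

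The step I expect to be the main obstacle is the translation of ``$q$ is a forking extension with maximal locus'' into the clean statement $U(a/A)-U(a/E)=1$, and correspondingly controlling $U(a/Ae)$. One has to be careful that ``maximal locus'' refers to the Zariski locus in the Zariski structure, not directly to $U$-rank, and to argue that a forking extension with maximal locus is a minimal forking extension one needs that the locus being a proper Zariski-closed subset of dimension one less corresponds to a drop of exactly $1$ in $U$-rank, which uses the finite-rank hypothesis and properties of Zariski structures. I would also need to check that $\cb(q)$ is well-behaved here, i.e.\ that working with $\stp$ over algebraically closed parameters lets us identify $e$ with an element of $\acl(E)$ so the $\cbp$ applies verbatim. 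Once these bookkeeping matters are settled, the argument is as above: $\cbp$ plus orthogonality gives $e\in\acl(Aa)$, and the Lascar inequalities plus minimality of the forking extension pin $U(e/A)$ to $1$.
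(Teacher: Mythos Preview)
Your argument for $e\in\acl(Aa)$ contains a genuine error. You write: ``since $\tp(a/A)$ is foreign to $\PP_0$ and $d$ is $\PP_0$-internal over $a$, one gets $d\ind_{Aa}(\text{realisations of }\PP_0)$.'' This does not follow. Foreignness of $\tp(a/A)$ to $\PP_0$ says that $a$ is independent over $A$ from realisations of $\PP_0$-types; it says nothing about independence over $Aa$. Indeed, $\stp(d/Aa)$ being $\PP_0$-internal means precisely the \emph{opposite}: $d$ is contained in the algebraic closure of $Aa$ together with realisations of $\PP_0$, so if $d\notin\acl(Aa)$ then $d$ certainly does fork with such realisations over $Aa$. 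Your deduction is therefore circular: you are assuming what you want to prove.

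More structurally, you try to establish $e\in\acl(Aa)$ \emph{before} using the maximal-locus hypothesis, and then derive $U(e/A)=1$ from it via Lascar. The paper's proof reverses this order, and the reversal is essential. The paper first proves that maximality of the locus forces $U(q)=U(p)-1$ (by producing, if $U(q)<U(p)-1$, an intermediate forking extension $\stp(a'/B)$ whose locus strictly contains $\loc(q)$). It then argues by contradiction: assuming $e\notin\acl(Aa)$, one sets $e_0=\acl(Ae)\cap\acl(Aa)$ and observes that $a\nind_{e_0}e$ (else $e=\cb(a/e)\subseteq\acl(e_0)\subseteq\acl(Aa)$). Now the rank computation $U(a/e_0)>U(a/e)=U(p)-1$ forces $U(a/e_0)=U(p)$, i.e.\ $a\ind_A e_0$. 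This is where $U(q)=U(p)-1$ is used, and it is exactly what lets orthogonality transfer: $\stp(a/e_0)$ is a nonforking extension of $p$, hence still orthogonal to $\PP_0$. The paper then invokes Chatzidakis's form of the $\cbp$, which gives that $\stp(e/e_0)$ is almost $\PP_0$-internal, contradicting $a\nind_{e_0}e$.

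So the missing idea in your proposal is the passage through $e_0=\acl(Ae)\cap\acl(Aa)$ and the use of $U(q)=U(p)-1$ to force $a\ind_A e_0$, which is what allows orthogonality of $p$ to be brought to bear. Without first pinning down the rank drop, there is no base over which $a$ is simultaneously independent (so that orthogonality transfers) and over which the relevant $\cbp$-internal type of $e$ lives.
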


\begin{proof}
First we note, without using the $\cbp$, that $q$ being a forking extension of $p$ of maximal locus is equivalent to $U(q)=U(p)-1$.
The right to left direction is clear.
For the converse,
note that $e\notin\acl(A)$ and hence there is a $B\supset A$ such that $U(e/B)=1$.
Let $q'=\stp(a'/Be)$ be the nonforking extension of $q$ to $Be$.
Since $e=\cb(q')$ and $e\notin\acl(B)$, $a'\nind_Be$.
That is, $e\in\acl(Ba')$.
By Lascar inequalities it follows that $U(a'/B)=U(a'/Be)+1=U(q)+1$.
So if $U(q)<U(p)-1$ then $\stp(a'/B)$ is a forking extension of $p$ which has $q'$ as a forking extension.
The locus of $\stp(a'/B)$ thus properly contains $\loc(q')=\loc(q)$, which is a contradiction to maximality.

So $U(q)=U(p)-1$.
Now if $e\notin\acl(Aa)$, then letting $e_0=\acl(Ae)\cap\acl(Aa)$ we would have $a\nind_{e_0}e$ since $e=\cb(a/e)$.
So $U(a/e_0)>U(a/e)=U(p)-1$ which forces $a\ind_Ae_0$, and so $\stp(a/e_0)$ is still orthogonal to the set of non locally modular minimal types.
On the other hand, CBP tells us that $\stp(e/e_0)$ is almost internal to the set of non locally modular minimal types
(this is Theorem~2.1 of~\cite{chatzidakis12} but see also Proposition~4.4 of~\cite{cbp}).
This contradicts $a\nind_{e_0}e$.
Hence $e\in\acl(Aa)$.
That $U(e/A)=1$ now follows immediately from the Lascar inequalities.
\end{proof}


\begin{thebibliography}{10}

\bibitem{cop}
F.~Campana, K.~Oguiso, and T.~Peternell.
\newblock Non-algebraic hyperk\"ahler manifolds.
\newblock {\em Journal of Differential Geometry}, 85(3):397--424, 2010.

\bibitem{chatzidakis12}
Z.~Chatzidakis.
\newblock A note on canonical bases and one-based types in supersimple
  theories.
\newblock {\em Confluentes Mathematici}, 4(3), 2012.

\bibitem{chatzidakis-hrushovski99}
Z.~Chatzidakis and E.~Hrushovski.
\newblock Model theory of difference fields.
\newblock {\em Trans. Amer. Math. Soc.}, (8):2997--3071, 1999.

\bibitem{fujiki83a}
A.~Fujiki.
\newblock On the structure of compact complex manifolds in {$\mathcal{C}$}.
\newblock In {\em Algebraic Varieties and Analytic Varieties}, volume~1 of {\em
  Advanced Studies in Pure Mathematics}, pages 231--302. North-Holland,
  Amsterdam, 1983.

\bibitem{fujiki83}
A.~Fujiki.
\newblock Relative algebraic reduction and relative {A}lbanese map for a fibre
  space in $\mathcal{C}$.
\newblock {\em Publications of the Research Institute for Mathematical
  Sciences}, 19(1):207--236, 1983.

\bibitem{markermanin}
D.~Marker.
\newblock Manin kernels.
\newblock In {\em Connections between model theory and algebraic and analytic
  geometry}, volume~6 of {\em Quad. Mat.}, pages 1--21. Dept. Math., Seconda
  Univ. Napoli, Caserta, 2000.

\bibitem{ret}
R.~Moosa.
\newblock A nonstandard {R}iemann existence theorem.
\newblock {\em Transactions of the American Mathematical Society},
  356(5):1781--1797, 2004.

\bibitem{sat}
R.~Moosa.
\newblock On saturation and the model theory of compact {K}\"ahler manifolds.
\newblock {\em Journal f\"ur die reine und angewandte Mathematik}, 586:1--20,
  2005.

\bibitem{cbp}
R.~Moosa and A.~Pillay.
\newblock On canonical bases and internality criteria.
\newblock {\em Illinois Journal of Mathematics}, 52(3):901--917, 2008.

\bibitem{pillay96}
A.~Pillay.
\newblock {\em Geometric Stability Theory}, volume~32 of {\em Oxford Logic
  Guides}.
\newblock Oxford Science Publications, Oxford, 1996.

\bibitem{pillay2000}
A.~Pillay.
\newblock Some model theory of compact complex spaces.
\newblock In {\em Workshop on {H}ilbert's tenth problem: relations with
  arithmetic and algebraic geometry}, volume 270, Ghent, 2000. Contemporary
  Mathematics.

\bibitem{pillaydag}
A.~Pillay.
\newblock Differential algebraic groups and the number of countable
  differentially closed fields.
\newblock In {\em Model theory of fields}, volume~5 of {\em Lect. Notes Log.},
  pages 114--134. Assoc. Symbol. Logic, second edition, 2006.

\bibitem{pillay06}
A.~Pillay.
\newblock Remarks on algebraic {$D$}-varieties and the model theory of
  differential fields.
\newblock In {\em Logic in {T}ehran}, volume~26 of {\em Lect. Notes Log.},
  pages 256--269. Assoc. Symbol. Logic, 2006.

\bibitem{zilberbook}
B.~Zilber.
\newblock {\em Zariski geometries: geometry from the logician's point of view}.
\newblock Number 360 in London Mathematical Society Lecture Note Series.
  Cambridge University Press, 2010.

\end{thebibliography}

\end{document}